\title{Further Results on the Majority Roman Domination in Graphs}
\author{
{\small Azam Sadat Emadi$^1$, Iman Masoumi$^2$, Seyed Reza Musawi$^3$ }
\\
{\small $^{1}$Department of Mathematics, Faculty of Mathematical Sciences, University of Mazandaran, Babolsar, Iran}\\
{\small math\_emadi2000@yahoo.com} \\
{\small $^{2}$ Department of Mathematics, Faculty of Mathematical Sciences, University of Tafresh, Tafresh, Iran}\\{\small  i.masoumi1359@gmail.com}\\
{\small $^{3}$ Faculty of Mathematical Sciences, Shahrood University of Technology, P.O. Box 36199-95161, Shahrood, Iran}\\ 
{\small r\_musawi@yahoo.com} 
}
\newtheorem{theorem}{Theorem}
\newtheorem{example}[theorem]{Example}
\newtheorem{lemma}[theorem]{Lemma}
\newtheorem{observation}[theorem]{Observation}
\newtheorem{proposition}[theorem]{Proposition}
\begin{document}
\maketitle

\begin{abstract}
Let $G=(V(G),E(G))$ be a simple graph of order $n$. A Majority Roman Dominating Function (MRDF) on a graph G is  a function $f: V\rightarrow\{-1, +1, 2\}$ if the sum of its function values over at least half of the closed neighborhoods
of its vertices is at least one, that is, for at least half of the vertices $v\in V$, $f(N[v])=\sum_{x \in N[v]} f(x)\geq 1$. Moreover, every vertex u with $f(u)=-1$ is adjacent to at least one vertex $w$ with $f(w)=2$. The Majority Roman Domination number of a graph $G$, denoted by $\gamma_{MR}(G)$, is the minimum value of  $\sum_{v\in{V(G)}}f(v)$ over all Majority Roman Dominating Function $f$ on $G$.\\
In this paper, we investigate the properties of Majority Roman Domination in graphs and establish some lower and upper bounds for the Majority Roman Domination number for various classes of graphs.
\end{abstract}
\textbf{Keywords}: Domination Number, Tree, Majority Domination, Majority Roman Domination. \\
\textbf{MSC 2010:} 05C78, 05C69

\section{Introduction}
Let $G = (V(G),E(G))$ be a simple graph. $V=V(G)$ is the vertex set of $G$ and $E=E(G)$ is the edge set of $G$. 
The \emph{order} of $G$ is equal to the number of vertices of $G$, $\vert V(G) \vert$, the cardinal of $V(G)$. For any vertex $ v\in V$, the \emph{open neighborhood} of $v$ is the set $N(v)=\{u\in V|uv\in E\} $ and the \emph{closed neighborhood} of $v$ is the set $N[v] = N(v)\cup \{v\}$. For a set $S\subseteq V$, the open neighborhood of $S$ is $N (S)=\bigcup_{v\in S}N(v)$ and the closed neighborhood of $S$ is $N[S]=N(S)\cup S$. A set $S\subseteq V$ is a \emph{dominating set} if $N[S] = V$, or equivalently, every vertex in $V\setminus S$ is adjacent to at least one vertex in $S$. The \emph{domination number} of $G$,
  $\gamma(G)$, is the minimum cardinality over all
 dominating sets in $G$. A dominating set with cardinality $\gamma(G)$ is called a \emph{$\gamma(G)$-set}.
 For more information on these parameters, refer to \cite{bl, bz}.
 \\
For a real-valued function $f: V\rightarrow R$, the weight of $f$ is defined as $w(f)=\sum_{v\in{V}}f(v)$ and also for a subset $S\subseteq V$, we define $f(S)=\sum_{v\in{S}} f(v)$. Therefore, $w(f)=f(V)$.
\\
In 1995, Broere et al. introduced the concept of
 \textit{majority domination} \cite{Broere}. A function $f: V\rightarrow \{-1, +1\}$ is called a majority dominating function
on $G$, if $f(N([v]))\geq 1$ for at least half of the vertices in $G$. The majority domination number of $G$, 
denoted by $\gamma_{maj}(G)$, is defined as $\gamma_{maj}(G)=min \{w(f) | f$ is a majority dominating function 
on $G\}$. And also, a function $f: V\rightarrow \{-1, +1\}$ is called a signed majority total dominating function
on $G$, if $f(N(v)) \geq 1$ for at least half of the vertices in $G$. The signed majority total domination number of $G$,  denoted by $\gamma^{t}_{maj}(G)$,
  is defined as $\gamma^{t}_{maj}(G)=min \{w(f) | f$ is a signed majority total dominating function 
  on $G\}$.
More results on the majority domination and signed majority total domination in graphs can be found in \cite{holm, jos, swa, xing, yeh}.
\\
 A Roman dominating function (RDF) on a graph $G=(V, E)$ is a function $f:V\rightarrow \{0, 1, 2\}$ satisfying the condition that every vertex $u$ for which $f(u)=0$ is adjacent to at least one vertex $v$ for which $f(v)=2$. The Roman domination number of $G$, denoted by $\gamma_{R}(G)$, equals the minimum weight of an RDF on $G$ \cite{cok}.
 \\
In 2021, S. Anandha Prabhavathy defined a majority Roman dominating function on a graph $G=(V, E)$ to be a function $f: V\rightarrow \{-1, +1, 2\}$ satisfying two conditions: (i) The sum of its function values over at least half of the closed neighborhood of its vertices is at least one, and (ii) Each vertex $u$ for which $f(u)=-1$ is adjacent to at least one vertex $v$ such that $f(v)=2$ \cite{anan}. The weight of an MRDF is the sum of its function values over all vertices. The Majority Roman domination number of $G$, denoted by $\gamma_{MR}(G)$, is defined as $\gamma_{MR}(G)=min \{w(f) | f$ is a majority Roman dominating function on $G\}$.
    \\
In this paper, motivated by the above concepts, we study the majority Roman domination number in some graphs and establish some bounds for the majority Roman domination number of some graphs.

\section{Majority Roman Domination Number of some graphs}
In this section, we obtain some bounds of the majority Roman domination number for graph $G$. 
Furthermore, we study the majority Roman domination number for various graphs. \\
In fact, If $f$ is an MRDF on a graph $G$, we divide the vertex set of $G$ into three 
disjoint subsets $V_{f,1}$, $V_{f,2}$ and $V_{f,-1}$ as follows:\\
$V_{f,1}=\{v \in V(G): f(v)=1\}$\\
$V_{f,2}=\{v \in V(G): f(v)=2\}$\\
$V_{f,-1}=\{v \in V(G): f(v)=-1\}$\\
Also, we consider the set $N_{f}=\{v \in V(G): f([v])\geqslant1\}$.

\begin{proposition}
Let $G$ be a graph with $n$ vertices, then 
   $\gamma_{MR}(G) \leqslant n$, and equality holds if and only if $G=\overline{K_n}$.
 \end{proposition}
\begin{proof}
It is easy to see that the constant function $f$ with $f(v)=1$ for all $v\in V$, is an MRDF on $G$. Hence, $\gamma_{MR}(G)\leq n$ and for graph $G=\overline{K_n}$ , we have $\gamma_{MR}(G) =n$. Now, assume that for a graph $G$, $\gamma_{MR}(G) =n$. We aim to prove that this graph 
 has no edge. By the contrary, suppose that $G$ has at least one edge. We consider two cases:\\
\textbf{Case 1. }
Let $G$ has a leaf  $v$, that is, $deg(v)=1$, and $w$ is its unique adjacent. We consider an MRDF $f$, defined by  $f(v)=-1$, $f(w)=2$ and $f(x)=1$ for all vertices other than $v$ and $w$.
Then, $\gamma_{MR}(G) \leq f(V)=2+(-1)+(n-2)=n-1$, a contradiction.\\
\textbf{Case 2. }
 Let $G$ has no leaf. Then there are two adjacent vertices $v$ and $w$ with degrees greater than one. Again, the function $f$ defined by $f(v)=-1$, $f(w)=2$ and $f(x)=1$ for all vertices other than $v$ and $w$, is an MRDF on $G$.
Now, we have $\gamma_{MR}(G) \leq f(V)= 2+(-1)+(n-2)=n-1$, a contradiction.\\
Therefore, $G$ have no edge, as required, and $G=\overline{K_n}$ for some $n$.
\end{proof}

We know that for any graph $G$, $\gamma(G)\leq \gamma_{R}(G) \leq 2 \gamma(G)$ \cite{cok}. Here, we obtain similar results about the majority Roman domination number of graphs.

\begin{proposition}\label{02}
  Let $G\ne\overline{K_n}$ be a graph. Then, $2\gamma(G)+1-n\leq \gamma_{MR}(G) \leq 2\gamma(G)$. These bounds are sharp, the lower bound for graph $P_3$, and the upper bound for  $C_3$.
\end{proposition}

\begin{proof}
To prove the lower bound, we consider a function $f$ as an MRDF on $G$.
First, we notice that the set $V_{f,2}\cup V_{f,1}$ is a dominating set of $G$, and since $G\ne\overline{K_n}$, we have $|V_{f,2}|\geq1$.
Now, we can see
$$f(V)=2|V_{f,2}|+|V_{f,1}|-|V_{f,-1}|=3|V_{f,2}|+2|V_{f,1}|-n=2(|V_{f,2}|+|V_{f,1}|)+|V_{f,2}|-n$$
and since $V_{f,2}$ and $V_{f,1}$ are two distinct sets, we have
$$f(V)=2|V_{f,2}\cup V_{f,1}|+|V_{f,2}|-n\geq 2\gamma(G)+1-n$$
that results $\gamma_{MR}(G)\geq 2\gamma(G)+1-n$ as required.

 To prove the upper bound, we construct an MRDF $f$ such that $V_{f,2}$ is a $\gamma$-set of $G$ and $|V_{f,1}|\leqslant|V_{f,-1}|$. First, we partition the vertex set $V$ to three distinct non-empty set $D$, $A$ and $B$ such that $D$ is a $\gamma$-set of $G$ and 
  $0\leqslant|A|-|B|\leqslant1$,
 then we define the function
 $h(v)=\left\{
 \begin{array}{cc}
 2&v\in D\\
 -1&v\in A\\
 1&v\in B
\end{array}.
 \right.$
\\
If $|N_h|\geqslant\dfrac{n}{2}$, we set $f=h$ and the proof is completed.
Otherwise, if $|N_h|<\dfrac{n}{2}$, then we define the new function
 $g(v)=\left\{
 \begin{array}{cc}
 2&v\in D\\
 -1&v\in B\\
 1&v\in A
\end{array}.
 \right.$
\\
Now, we have $|N_g|\geqslant\dfrac{n}{2}$.
In fact, if $f(x)\leq0$ for some vertex $x$, we have $g(x)\geq4$.
If $|A|=|B|$, then we set $f=g$ and the proof is completed and if $|A|=|B|+1$, then we choose an arbitrary vertex $x\in A$ and we change the value function $g(x)$ from $1$ to $-1$.The inequality $|N_g|\geqslant\dfrac{n}{2}$ still holds.
In each case, we have $|V_{f,2}|=|D|$, $|V_{f,-1}|\geqslant|V_{f,1}|$ and $f(V)\leqslant2\gamma(G)$.
\end{proof}

The join of two graphs $G_1$ and $G_2$, denoted by $G_1 \vee G_2$, is a graph with vertex set $V(G_1) \cup V(G_2)$ and edge set $E(G_1) \cup E(G_2) \cup \{xy : x \in V(G_1), y \in V(G_2)\}$. For example, $K_1 \vee K_2=K_3$, $K_2 \vee K_2=K_4$, $\overline{K_r} \vee \overline{K_s}=K_{r,s}$.

\begin{observation}\label{o1}
Let $G$ and $H$ be two graphs such that $\gamma_{MR}(G) \geq 0$ and $\gamma_{MR}(H) \geq 0$. Then, $\gamma_{MR}(G \vee H) \leq \gamma_{MR}(G) + \gamma_{MR}(H)$.   
\end{observation}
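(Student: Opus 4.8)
The plan is to build an explicit MRDF on $G \vee H$ by concatenating optimal functions on the two factors, and then to check that its weight equals $\gamma_{MR}(G) + \gamma_{MR}(H)$. Let $f_G$ be a $\gamma_{MR}(G)$-function and $f_H$ a $\gamma_{MR}(H)$-function, and define $f\colon V(G\vee H)\to\{-1,+1,2\}$ by $f(v)=f_G(v)$ for $v\in V(G)$ and $f(v)=f_H(v)$ for $v\in V(H)$. Immediately $w(f)=w(f_G)+w(f_H)=\gamma_{MR}(G)+\gamma_{MR}(H)$, so it suffices to prove that $f$ is a legitimate MRDF on $G\vee H$.

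First I would dispatch the Roman condition (ii). A vertex $u$ with $f(u)=-1$ lies entirely in one factor, say $u\in V(G)$ with $f_G(u)=-1$; since $f_G$ is an MRDF on $G$, $u$ has a neighbor $w$ in $G$ with $f_G(w)=2$, and this edge survives in $G\vee H$, so $f(w)=2$. The same argument applies when $u\in V(H)$, so (ii) holds for $f$ verbatim, inherited edge-by-edge from the factors.

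The crux is the majority condition (i), and this is where the hypotheses $\gamma_{MR}(G)\ge 0$ and $\gamma_{MR}(H)\ge 0$ enter. The key structural observation is that for $v\in V(G)$ the join absorbs all of $H$ into the closed neighborhood, i.e. $N_{G\vee H}[v]=N_G[v]\cup V(H)$, whence $f(N_{G\vee H}[v])=f_G(N_G[v])+w(f_H)$. Because $w(f_H)=\gamma_{MR}(H)\ge 0$, every vertex of $G$ that satisfied $f_G(N_G[v])\ge 1$ still satisfies $f(N_{G\vee H}[v])\ge 1$; symmetrically for $v\in V(H)$ using $w(f_G)\ge 0$. Letting $A_G\subseteq V(G)$ and $A_H\subseteq V(H)$ denote the vertices that are ``good'' under $f_G$ and $f_H$ respectively, the MRDF property of the factors gives $|A_G|\ge |V(G)|/2$ and $|A_H|\ge |V(H)|/2$. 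Since $A_G$ and $A_H$ are disjoint, the set of good vertices of $G\vee H$ contains $A_G\cup A_H$, which has at least $\tfrac{1}{2}(|V(G)|+|V(H)|)=\tfrac{1}{2}|V(G\vee H)|$ elements; thus at least half the vertices of $G\vee H$ are good and (i) holds.

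With both conditions verified, $f$ is an MRDF of $G\vee H$, and the minimality of $\gamma_{MR}(G\vee H)$ yields $\gamma_{MR}(G\vee H)\le w(f)=\gamma_{MR}(G)+\gamma_{MR}(H)$. The only genuinely delicate point is the majority count: one must keep the nonnegativity hypotheses in play so that folding in the opposite factor's total weight never destroys a previously good closed neighborhood, and one should double-check the ceiling arithmetic when $|V(G)|$ and $|V(H)|$ are both odd so that the two half-counts really do add up to at least half of $|V(G\vee H)|$. I expect no further obstacles.
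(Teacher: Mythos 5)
Your proof is correct, and since the paper states this as an Observation with no proof at all, your argument (concatenate optimal MRDFs on the two factors, note that each closed neighborhood in the join gains the full weight of the opposite factor, and use the nonnegativity hypotheses to preserve the good vertices) is exactly the intended one. The counting step is sound as written: $|A_G|+|A_H|\geq \tfrac{1}{2}(|V(G)|+|V(H)|)$ holds directly, and integrality takes care of any ceiling concerns.
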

\begin{proof} Let the function $f$ be an MRDF on $G$. So, we have $f(N[v_i])\geq 1$, for at least half of vertices in graph $G$. And also, let the function $g$ be an MRDF on $H$. So, we have $g(N[u_i])\geq 1$, for at least half of vertices in graph $H$. We consider the function $k$ on vertices of $G \vee H$ such that:
$k(v_i)=\left\{
\begin{array}{cc}
f(v_i)&v_i \in V(G) \\
g(v_i)& v_i\in V(H)\\
\end{array}.
\right.$
 Now, it can be easily shown that the funktion $k$ is a MRDF on $G \vee H$.
\end{proof}

For two graphs $G=P_2$ and $H=K_1$, we have $\gamma_{MR}(G \vee H) =\gamma_{MR}(C_3)= 2=\gamma_{MR}(G) + \gamma_{MR}(H)$, but
$\gamma_{MR}(G \vee G) =\gamma_{MR}(K_4)= 1<2=\gamma_{MR}(G) + \gamma_{MR}(G)$.\\

The join $K_1 \vee P_{n-1}$, denoted by $F_n$, is called as a fan graph, and the join $K_1 \vee C_{n-1}$, denoted by $W_n$, is called as a wheel graph. 

We prove  the next result on the structure of all $\gamma_{MR}$-functions of $F_n$ and $W_n$.
 
\begin{lemma}\label{lem01}
For graphs $F_n$ and $W_n$, if $f$ be a $\gamma_{MR}$-function, then we have $|V_{f,2}|=1$, and if $n\geqslant5$, then the unique element of $V_{f,2}$ has the degree $n-1$.
\end{lemma} 

\begin{proof}
Let $V(F_n) =V(W_n)= \{v_0, v_1, v_2, \cdots, v_{n-1} \}$, $E(F_n)= \{v_1v_2, v_2v_3, \cdots, v_{n-2}v_{n-1}\} \cup \{v_0v_i : 1 \leq i\leq {n-1}\}$, and
$E(W_n)=E(F_n)\cup\{v_1v_{n-1}\}$.

If $0<r<s<n-1$ and $v_r,v_s\in V_{f,2}$, then function 
$g(v_i)=\left\{
\begin{array}{cc}
f(v_r)&i=0\\
f(v_0)&i=r\\
1&i=s\\
f(v_i)&\text{otherwise}
\end{array}
\right.$
is an MRDF such that $g(V)<f(V)$, a contradiction.

Now, we consider $f(v_0)=f(v_r)=2$, where $0<r<n-1$. if we set $f(v_r)=1$, $f$ still remains an MRDF with less weight, a contradiction. 

Hence in each $\gamma_{MR}$-function of $F_n$ and $W_n$, we have $|V_{f,2}|=1$. If $n=4$,then $V_{f,2}$ can be any of four one element subset of $V$, while for $n\geqslant5$ we must have $V_{f,2}=\{v_0\}$ and $deg(v_0)=n-1$.

\end{proof}

It is proved for wheel graphs that:

\begin{theorem}\cite{anan}\label{wheel}
For any wheel graph $W_n$, on $n \geq 4$ vertices, we have:
$$\gamma_{MR}(W_n)=2 \lceil \dfrac{n}{6} \rceil -n +3$$
\end{theorem}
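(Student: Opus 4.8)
The plan is to treat $W_n$ as the join $K_1 \vee C_{n-1}$: write $c$ for the hub and $v_1,\dots,v_{n-1}$ for the rim, so that $N[c]=V(W_n)$ while $N[v_i]=\{c,v_{i-1},v_i,v_{i+1}\}$ (indices mod $n-1$). The governing identity is that for a rim vertex $f(N[v_i])=f(c)+f(v_{i-1})+f(v_i)+f(v_{i+1})$, so once we fix $f(c)=2$ the majority requirement $f(N[v_i])\ge 1$ on the rim collapses to the purely cyclic condition $f(v_{i-1})+f(v_i)+f(v_{i+1})\ge -1$, and, since $f(c)=2$, the Roman condition is automatically met by every rim vertex carrying $-1$. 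First I would record the bookkeeping fact behind the formula: if on the rim we use only values in $\{-1,+1\}$ and lay down $k$ maximal runs of $-1$'s separated by single $+1$'s, then a rim vertex fails $f(v_{i-1})+f(v_i)+f(v_{i+1})\ge -1$ exactly when it is interior to a run, so each period (one run plus its separating $+1$) contains precisely three good vertices (the two run-ends and the separator). Thus such an $f$ has exactly $3k$ good vertices and weight $2+k-(n-1-k)=2k-n+3$.

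For the upper bound I would run this construction with $k=\lceil n/6\rceil$. Then the number of good vertices is $3k=3\lceil n/6\rceil\ge \lceil n/2\rceil$ (using $3\lceil n/6\rceil\ge n/2$ and integrality), so at least half the closed neighborhoods have sum at least one; distributing the $n-1-k$ minus-ones as evenly as possible among the $k$ runs keeps everything well defined. This exhibits an MRDF of weight $2\lceil n/6\rceil-n+3$, giving $\gamma_{MR}(W_n)\le 2\lceil n/6\rceil-n+3$.

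The substance is the matching lower bound. Let $f$ be any MRDF and let $D=\{v:f(N[v])\ge 1\}$, so $|D|\ge\lceil n/2\rceil$. If $c\in D$ then $w(f)=f(N[c])\ge 1$, and since the claimed value never exceeds $1$ for $n\ge 4$ (its maximum over $n\ge 4$ is attained at $n=4$, where $W_4=K_4$), we are done. So assume $D$ consists only of rim vertices. The key structural observation is that, because $f(c)\le 2$, any rim vertex lying in the interior of a maximal run of $-1$'s has $f(N[v])\le 2+(-1)+(-1)+(-1)=-1<1$ and is therefore not in $D$; hence if the rim carries $m$ minus-ones arranged in $k$ runs of lengths $\ell_j$, the number of good rim vertices is at most $(n-1)-\sum_j(\ell_j-2)^+\le (n-1)-(m-2k)$. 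Combining $|D|\ge\lceil n/2\rceil$ with the fact that separating $k$ runs costs at least $k$ rim vertices of value $\ge 1$ forces, after optimizing over the integer $k$, the bound $m\le n-1-\lceil n/6\rceil$, whence $w(f)\ge 2+(n-1-m)-m\ge 2\lceil n/6\rceil-n+3$.

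The main obstacle, and the step I would spend the most care on, is making the lower bound airtight when $f(c)\ne 2$ and when the rim is allowed to carry the value $2$. For $f(c)\in\{-1,1\}$ the cyclic threshold rises to $f(v_{i-1})+f(v_i)+f(v_{i+1})\ge 1-f(c)\ge 0$, which creates strictly more bad vertices (run-ends now also fail unless rescued by an adjacent $2$), so the naive count above must be refined to confirm the weight still cannot dip below $2\lceil n/6\rceil-n+3$; I expect these cases to be genuinely worse but to need a short separate argument rather than a single inequality. Likewise one must check that inserting $2$'s on the rim, which could shorten the forced bad interiors, never repays its extra cost of $+2$ per vertex. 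These verifications are routine but fiddly, and I would isolate them as two small lemmas so that the clean $f(c)=2$ computation carries the main line of the proof.
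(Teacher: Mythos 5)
This theorem is quoted by the paper from reference \cite{anan} and is given \emph{without proof}, so there is no in-paper argument to measure you against; what follows judges your proposal on its own terms. The mechanism you chose is the right one: fixing $f(c)=2$ reduces the rim condition to the cyclic window inequality, and the count of three good vertices per maximal run of $-1$'s is exactly what produces $\lceil n/6\rceil$. Your upper-bound construction is correct (one only needs $n-1-k\ge 2k$ so that every run has length at least $2$, which holds for all $n\ge 4$ with $k=\lceil n/6\rceil$), and your lower bound in the case $f(c)=2$ is also complete as written: the worry about rim vertices of value $2$ is unfounded there, since interiors of $-1$-runs still satisfy $f(N[v])\le 2-3<1$ and replacing a $+1$ by a $2$ only increases the weight, so the chain $m\le n-1-\lceil n/6\rceil$, $w(f)\ge n+1-2m\ge 2\lceil n/6\rceil-n+3$ goes through verbatim.

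The genuine gap is the pair of cases you explicitly defer, and they are not equally easy. For $f(c)=+1$ the Roman condition alone closes the case: a rim vertex with value $-1$ must have a \emph{rim} neighbour of value $2$, so $m\le 2q$ where $q$ is the number of rim $2$'s, and $w(f)=n+q-2m\ge\max(n-3q,\,3q-n+2)\ge 1$, which meets the target because the claimed value never exceeds $1$. For $f(c)=-1$ the same computation only yields $w(f)\ge -1$; this suffices for $n=6$ and $n\ge 8$, but fails for $n\in\{4,5,7\}$, where the claimed value is $1$, $0$, $0$ respectively. For instance, with $n=7$ the assignment $f(c)=-1$ and rim pattern $2,-1,-1,2,-1,-1$ has weight $-1$ and satisfies the Roman condition; it is excluded only because every closed neighbourhood sums to $-1$, i.e.\ by the majority condition. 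So the $f(c)=-1$ case genuinely requires the majority count (with $f(c)=-1$ a good rim vertex needs window sum $\ge 2$, hence either a $2$ in its window or three $+1$'s, bounding $|D|$ by $3q$ plus the all-$+1$ windows) together with a direct check of the small values of $n$. Until those two lemmas are actually written out, the lower bound, and hence the theorem, is not proved.
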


Now, we claim that the same result holds for fan graphs.
\begin{proposition}
For every fan graph
 $F_n=K_1 \vee P_{n-1}$, $n \geq 4$, we have:
$$\gamma_{MR}(F_n)=\gamma_{MR}(W_n)$$
\end{proposition}
\begin{proof}
Let $V(F_n) = \{v_0, v_1, v_2, \cdots, v_{n-1} \}$ and $E(F_n)= \{v_1v_2, v_2v_3, \cdots, v_{n-2}v_{n-1}\} \cup \{v_0v_i : 1 \leq i\leq {n-1}\}$. Let $f$ be an MRDF on $F_n$. By Lemma \ref{lem01}, we know that $ V_{f,2}=\{v_0\}$,  if $v_i\in N_f$, for some $2\leqslant i\leqslant n-2$, then $\{v_{i-1},v_i, v_{i+1}\}\cup V_{f,1}\ne\emptyset$. On the other hand, 
  if $v_i\in V_{f,1}$, for some $2\leqslant i\leqslant n-2$, then $\{v_{i-1},v_i, v_{i+1}\}\subseteq N_f$. 
  Hence, each vertex in $V_{f,1}$ adds a maximum of 3 vertices to $N_f$. To establish the condition that $|N_f|\geqslant\dfrac{n}{2}$, we must have $3\times |V_{f,1}|\geqslant \dfrac{n}{2}$, that is, $|V_{f,1}|\geqslant \dfrac{n}{6}$
 \\
 Now, we have $\gamma_{MR}(F_n) = 2\vert V_{f,2} \vert +\vert V_{f,1} \vert -\vert V_{f,-1} \vert\geqslant 2\times 1 +\lceil \dfrac{n}{6}\rceil - (n-\lceil \dfrac{n}{6}\rceil-1)\geqslant 2\lceil \dfrac{n}{6}\rceil -n +3 $.
 \\
 Hence, we get the lower bound $\gamma_{MR}(F_n)\geqslant 2\lceil \dfrac{n}{6}\rceil -n +3 $.

 On the other hand, we define the function $g: V(F_n)\rightarrow \{-1, 1, 2\}$ as follows.
 
$$g(v_i)=\left\{
\begin{array}{ccl}
2&&i=0\\
1&&i=3k,\; k=1,2,\cdots, \lceil \dfrac{n}{6}\rceil \\
-1&&\text{otherwise}
\end{array}
\right.$$

It is easy to see that for $n=4$ we have $N_g=\{v_0, v_2, v_3\}$ and for
every $n \geqslant 5$, we have $N_g=\{v_2, v_3,\cdots, v_{3\lceil \dfrac{n}{6}\rceil+1}\}$, and so the function $g$ is an MRDF on graph $F_n$ with $G(V)=2\lceil \dfrac{n}{6}\rceil -n +3 $.

Therefore, we have  $\gamma_{MR}(F_n)=2\lceil \dfrac{n}{6}\rceil -n +3= \gamma_{MR}(W_n)$. Figure \ref{fig1} depicts it for the cases $n=4, 5,6$.
\end{proof}

\begin{figure}[h]\label{01}
\begin{center}
\begin{tikzpicture}
\node at (0,0){
\begin{tikzpicture}
\node at (0,1) [label=above:$2$]{};
\node at (-1,0) [label=below:$-1$]{};
\node at (0,0) [label=below:$-1$]{};
\node at (1,0) [label=below:$1$]{};

\node at (4.5,1) [label=above:$2$]{};
\node at (3,0) [label=below:$-1$]{};
\node at (4,0) [label=below:$-1$]{};
\node at (5,0) [label=below:$1$]{};
\node at (6,0) [label=below:$-1$]{};

\node at (10,1) [label=above:$2$]{};
\node at (8,0) [label=below:$-1$]{};
\node at (9,0) [label=below:$-1$]{};
\node at (10,0) [label=below:$1$]{};
\node at (11,0) [label=below:$-1$]{};
\node at (12,0) [label=below:$-1$]{};

\draw(0,0)--(1,0);\draw(0,0)--(-1,0);\draw(0,0)--(0,1);\draw(0,1)--(-1,0);\draw(0,1)--(1,0);

\draw(5,0)--(6,0);\draw(3,0)--(4,0);\draw(4,0)--(5,0);\draw(4.5,1)--(5,0);\draw(4.5,1)--(6,0);\draw(4.5,1)--(3,0);\draw(4.5,1)--(4,0);

\draw(12,0)--(11,0);\draw(9,0)--(10,0);\draw(10,0)--(11,0);\draw(8,0)--(9,0);\draw(10,1)--(12,0);\draw(10,1)--(8,0);\draw(10,1)--(9,0);\draw(10,1)--(11,0);\draw(10,1)--(10,0);

\foreach \x in {  -1, 0, 1, 3, 4, 5, 6, 8, 9, 10, 11, 12 }
\foreach \y in { 0 } \filldraw (\x,\y)circle(2pt);

\foreach \x in { 0, 4.5, 10}
\foreach \y in { 1 } \filldraw (\x,\y)circle(2pt);

\end{tikzpicture}
};
\end{tikzpicture}
\end{center}
\caption{Majority Roman domination labeling on $F_4$, $F_5$ and $F_6$}
\label{fig1}
\end{figure}

For two graphs $G$ and $H$, we define the Cartesian product of $G$ and $H$ to be the graph $G \square H$ with vertices $\{(u, v)| u \in V(G), ~v \in V(H)\}$. Two vertices $(u_1, v_1)$ and $(u_2, v_2)$ are adjacent in $G \square H$ if and only if one of the following is true: $u_1=u_2$ and $v_1$ is adjacent to $v_2$ in $H$; or $v_1=v_2$ and $u_1$ is adjacent to $u_2$ in $G$. If $G=P_m$ and $H=P_n$, then the Cartesian product $G \square H$ is called the $m \times n$ grid graph and is denoted $G_{m, n}$.
\begin{figure}[h]\label{11}
\begin{center}
\begin{tikzpicture}
\node at (0,0){
\begin{tikzpicture}
\draw(0,0)rectangle(0,1);
\foreach \x in {0}
\foreach \y in {0,1} \filldraw (\x,\y)circle(2pt);
\node at (0,-.3){$2$};\node at (0,1.3){$-1$};
\end{tikzpicture}
};
\node at (2.5,0){
\begin{tikzpicture}
\draw(0,0)rectangle(1,1);
\foreach \x in {0,1}
\foreach \y in {0,1} \filldraw (\x,\y)circle(2pt);
\node at (0,-.3){$1$};\node at (0,1.3){$-1$};
\node at (1,-.3){$-1$};\node at (1,1.3){$2$};
\end{tikzpicture}
};
\node at (6,0){
\begin{tikzpicture}
\draw(0,0)rectangle(2,1);
\draw(1,0)--(1,1);
\foreach \x in {0,1,2}
\foreach \y in {0,1} \filldraw (\x,\y)circle(2pt);
\node at (0,-.3){$1$};\node at (0,1.3){$-1$};
\node at (1,-.3){$-1$};\node at (1,1.3){$2$};
\node at (2,-.3){$1$};\node at (2,1.3){$-1$};
\end{tikzpicture}
};
\node at (10.5,0){
\begin{tikzpicture}
\draw(0,0)rectangle(3,1);
\draw(1,0)--(1,1);\draw(2,0)--(2,1);
\foreach \x in {0,1,2,3}
\foreach \y in {0,1} \filldraw (\x,\y)circle(2pt);
\node at (0,-.3){$1$};\node at (0,1.3){$-1$};
\node at (1,-.3){$-1$};\node at (1,1.3){$2$};
\node at (2,-.3){$-1$};\node at (2,1.3){$-1$};
\node at (3,-.3){$2$};\node at (3,1.3){$1$};
\end{tikzpicture}
};
\end{tikzpicture}
\end{center}
\caption{A majority Roman domination labeling on $G_{2,1}$, $G_{2,2}$, $G_{2,3}$ and $G_{2,4}$}
\label{fig2}
\end{figure}

\begin{theorem}
For the $2 \times n$ grid graph $G_{2, n}$, we have,
 $\gamma_{MR}(G_{2,4})=2$ and $\gamma_{MR}(G_{2,4})\leq 1$, $n\neq 4$.
\end{theorem}

\begin{proof} 
First of all, we prove $\gamma_{MR}(G_{2, 4})=2$. Suppose that $f$ is a $\gamma_{MR}$-function  of $G_{2, 4}$. We show that $f(V)\geqslant 2$. Note that $G$ has 8 vertices with at most 3 neighbors.\\
(i) $|V_{f,2}|=1$. we know $|V_{f,-1}|\leqslant3$ and so $f(V)\geqslant 2-3+4=3$. 
\\
(ii) $|V_{f,2}|=2$. we know $|V_{f,-1}|\leqslant5$. If  $|V_{f,-1}|\leqslant4$, then $f(V)\geqslant 2\times2-4+2=2$.
\\
If $|V_{f,-1}|=5$, then by symmetry, there is a unique definition for $f$ to satisfy for the adjacency condition for MRDFs, but we see that $|N_f|=2$, a contradiction.\\
(iii) $|V_{f,2}|=3$. we know $|V_{f,-1}|\leqslant5$. If  $|V_{f,-1}|\leqslant4$, then $f(V)\geqslant 3\times2-4+1=3$.
\\
If $|V_{f,-1}|=5$, then by symmetry, there is three definition for $f$ to satisfy for the adjacency condition for MRDFs, but we see that $|N_f|\leq3$, a contradiction.\\
(iv) $|V_{f,2}|\geqslant4$. It is clear that $f(V)\geqslant4\times2-4=4$.\\
Therefore, $f(V)=\gamma_{MR}(G_{2, 4})\geq 2$. On the other hand, the MRDF on $G_{2, 4}$, in Figure \ref{fig2}, shows that $\gamma_{MR}(G_{2, 4})=2$.

We show that $\gamma_{MR}(G_{2, n}) \leq 1$, for $n\neq 4$.\\
We set $E(G_{2, n})=\{v_{1, 1}, v_{1,2}, \ldots, v_{1,n}, v_{2,1}, v_{2,2}, \dots, v_{2,n}\}$ and we define four different functions $f$, $g$, $h$ and $k$ as the majority Roman dominating functions on graph $G_{2, n}$ for $n=4k, ~k\geq 2$, $n=4k+1$, $n=4k+2$ and $n=4k+3$, respectively.

Let $f: V(G_{2, n}) \rightarrow \{1, -1, 2\}$ be defined as follows. Then $f$ is an MRDF on $ G_{2, n}$ for $n=4k$, where $k \geq 2$.

$$
f(v_{i, j})=\left\{
\begin{array}{ccl}
1&& i=1,\; j=4k^{'},~ 1\leq k^{'} \leq k-1\\
2&& i=1,\; j=4k^{'}+3,~ 1\leq k^{'} \leq k-1\\
2&& i=2,\; j=4k^{'}+1 , ~1 \leq k^{'} \leq k-1\\
2&&i=1,j=2\;\textnormal{or}\; i=2,j=2,n-1\\
-1&&\textnormal{ otherwise} 
\end{array}\right.
$$
Figure \ref{fig3} illustrates it for the case $n=12$.
\begin{figure}[h]\label{12}
\begin{center}
\begin{tikzpicture}
\node at (0,0){
\begin{tikzpicture}
\draw(0,0)rectangle(11,1);
\node at (0,1.3){$-1$};\node at (1,1.3){$2$};\node at (2,1.3){$-1$};
\node at (3,1.3){$1$};\node at (4,1.3){$-1$};\node at (5,1.3){$-1$};
\node at (6,1.3){$2$};\node at (7,1.3){$1$};\node at (8,1.3){$-1$};
\node at (9,1.3){$-1$};\node at (10,1.3){$2$};\node at (11,1.3){$-1$};
\node at (0,-.3){$-1$};\node at (1,-.3){$2$};\node at (2,-.3){$-1$};
\node at (3,-.3){$-1$};\node at (4,-.3){$2$};\node at (5,-.3){$-1$};
\node at (6,-.3){$-1$};\node at (7,-.3){$-1$};\node at (8,-.3){$2$};
\node at (9,-.3){$-1$};\node at (10,-.3){$2$};\node at (11,-.3){$-1$};
\draw(1,1)--(1,0);\draw(2,1)--(2,0);\draw(3,1)--(3,0);\draw(4,1)--(4,0);\draw(5,1)--(5,0);\draw(6,1)--(6,0);\draw(7,1)--(7,0);\draw(8,1)--(8,0);\draw(9,1)--(9,0);\draw(10,1)--(10,0);
\foreach \x in {0,1,2,3,4,5,6,7,8,9,10,11}
\foreach \y in {0,1} \filldraw (\x,\y)circle(2pt);
\end{tikzpicture}
};
\end{tikzpicture}
\end{center}
\caption{Majority Roman domination labeling on $G_{2,12}$}
\label{fig3}
\end{figure}

When $n=4k+1$, we define the function $g: V(G_{2, n})\rightarrow \{1, -1, 2\}$ as follows. $g$ is an MRDF on $G_{2, n}$.

$$
g(v_{i, j})=\left\{
\begin{array}{ccl}
1&& i=2,\; j=1\\
1&& i=1,\; j=4k^{'},~ 1\leq k^{'} \leq k-1\\
2&& i=1,\; j=4k^{'}+2,~ 0\leq k^{'} \leq k-1\\
2&& i=2,\; j=4k^{'} , ~1 \leq k^{'} \leq k\\
2&&i=1,j=n-1\\
-1&&\textnormal{ otherwise} 
\end{array}\right.
$$

Figure \ref{fig4} illustrates it for the case $n=13$.
 
\begin{figure}[h]\label{13}
\begin{center}
\begin{tikzpicture}
\node at (0,0){
\begin{tikzpicture}
\draw(0,0)rectangle(12,1);
\node at (0,1.3){$-1$};\node at (1,1.3){$2$};\node at (2,1.3){$-1$};
\node at (3,1.3){$1$};\node at (4,1.3){$-1$};\node at (5,1.3){$2$};
\node at (6,1.3){$-1$};\node at (7,1.3){$1$};\node at (8,1.3){$-1$};
\node at (9,1.3){$2$};\node at (10,1.3){$-1$};\node at (11,1.3){$2$};
\node at (12,1.3){$-1$};\node at (0,-.3){$1$};\node at (1,-.3){$-1$};
\node at (2,-.3){$-1$};\node at (3,-.3){$2$};\node at (4,-.3){$-1$};
\node at (5,-.3){$-1$};\node at (6,-.3){$-1$};\node at (7,-.3){$2$};
\node at (8,-.3){$-1$};\node at (9,-.3){$-1$};\node at (10,-.3){$-1$};
\node at (11,-.3){$2$};\node at (12,-.3){$-1$};
\draw(1,1)--(1,0);\draw(2,1)--(2,0);\draw(3,1)--(3,0);\draw(4,1)--(4,0);\draw(5,1)--(5,0);\draw(6,1)--(6,0);\draw(7,1)--(7,0);\draw(8,1)--(8,0);\draw(9,1)--(9,0);\draw(10,1)--(10,0);\draw(11,1)--(11,0);
\foreach \x in {0,1,2,3,4,5,6,7,8,9,10,11,12}
\foreach \y in {0,1} \filldraw (\x,\y)circle(2pt);
\end{tikzpicture}
};
\end{tikzpicture}
\end{center}
\caption{Majority Roman domination labeling on $G_{2,13}$}
\label{fig4}
\end{figure}
When $n=4k+2$, we define the function 

$$
h(v_{i, j})=\left\{
\begin{array}{ccl}
1&& i=2,\; j=1\\
1&& i=1,\; j=4k^{'},~ 1\leq k^{'} \leq k\\
2&& i=1,\; j=4k^{'}+2,~ 0\leq k^{'} \leq k\\
2&& i=2,\; j=4k^{'} , ~1 \leq k^{'} \leq k\\
-1&&\textnormal{ otherwise} 
\end{array}\right.
$$
as an MRDF on $G_{2, n}$.

Figure \ref{fig5} illustrates it for the case $n=14$.
\begin{figure}[h]\label{14}
\begin{center}
\begin{tikzpicture}
\node at (0,0){
\begin{tikzpicture}
\draw(0,0)rectangle(13,1);
\node at (0,1.3){$-1$};\node at (1,1.3){$2$};\node at (2,1.3){$-1$};
\node at (3,1.3){$1$};\node at (4,1.3){$-1$};\node at (5,1.3){$2$};
\node at (6,1.3){$-1$};\node at (7,1.3){$1$};\node at (8,1.3){$-1$};
\node at (9,1.3){$2$};\node at (10,1.3){$-1$};\node at (11,1.3){$1$};
\node at (12,1.3){$-1$};\node at (13,1.3){$2$};\node at (0,-.3){$1$};
\node at (1,-.3){$-1$};\node at (2,-.3){$-1$};\node at (3,-.3){$2$};
\node at (4,-.3){$-1$};\node at (5,-.3){$-1$};\node at (6,-.3){$-1$};
\node at (7,-.3){$2$};\node at (8,-.3){$-1$};\node at (9,-.3){$-1$};
\node at (10,-.3){$-1$};\node at (11,-.3){$2$};\node at (12,-.3){$-1$};
\node at (13,-.3){$-1$};
\draw(1,1)--(1,0);\draw(2,1)--(2,0);\draw(3,1)--(3,0);\draw(4,1)--(4,0);\draw(5,1)--(5,0);\draw(6,1)--(6,0);\draw(7,1)--(7,0);\draw(8,1)--(8,0);\draw(9,1)--(9,0);\draw(10,1)--(10,0);\draw(11,1)--(11,0);\draw(12,1)--(12,0);
\foreach \x in {0,1,2,3,4,5,6,7,8,9,10,11,12,13}
\foreach \y in {0,1} \filldraw (\x,\y)circle(2pt);
\end{tikzpicture}
};
\end{tikzpicture}
\end{center}
\caption{Majority Roman domination labeling on $G_{2,14}$}
\label{fig5}
\end{figure}

Finally, we have the following function $h$ as an MRDF on $G_{2, n}$, where $n=4k+3$.

$$
h(v_{i, j})=\left\{
\begin{array}{ccl}
1&& i=2,\; j=1, 4k+3\\
1&& i=1,\; j=4k^{'},~ 1\leq k^{'} \leq k\\
2&& i=1,\; j=4k^{'}+2,~ 0\leq k^{'} \leq k\\
2&& i=2,\; j=4k^{'} , ~1 \leq k^{'} \leq k\\
-1&&\textnormal{ otherwise} 
\end{array}\right.
$$

Figure \ref{fig6} illustrates it for the case $n=15$.

\begin{figure}[h]\label{15}
\begin{center}
\begin{tikzpicture}
\node at (0,0){
\begin{tikzpicture}
\draw(0,0)rectangle(14,1);
\node at (0,1.3){$-1$};\node at (1,1.3){$2$};\node at (2,1.3){$-1$};
\node at (3,1.3){$1$};\node at (4,1.3){$-1$};\node at (5,1.3){$2$};
\node at (6,1.3){$-1$};\node at (7,1.3){$1$};\node at (8,1.3){$-1$};
\node at (9,1.3){$2$};\node at (10,1.3){$-1$};\node at (11,1.3){$1$};
\node at (12,1.3){$-1$};\node at (13,1.3){$2$};\node at (14,1.3){$-1$};
\node at (0,-.3){$1$};\node at (1,-.3){$-1$};\node at (2,-.3){$-1$};
\node at (3,-.3){$2$};\node at (4,-.3){$-1$};\node at (5,-.3){$-1$};
\node at (6,-.3){$-1$};\node at (7,-.3){$2$};\node at (8,-.3){$-1$};
\node at (9,-.3){$-1$};\node at (10,-.3){$-1$};\node at (11,-.3){$2$};
\node at (12,-.3){$-1$};\node at (13,-.3){$-1$};\node at (14,-.3){$1$};
\foreach \x in {0,1,...,14}{\draw(\x,1)--(\x,0);
\foreach \y in {0,1} \filldraw (\x,\y)circle(2pt);}
\end{tikzpicture}
};
\end{tikzpicture}
\end{center}
\caption{Majority Roman domination labeling on $G_{2,15}$}
\label{fig6}
\end{figure}
By these four cases, it is straightforward to verify that  $\gamma_{MR}(G_{2, n})\leq 1$ for all positive integers $n\neq 4$, as illustrated in Figures \ref{fig2}, \ref{fig3}, \ref{fig4}, \ref{fig5} and \ref{fig6}.
\end{proof}
In \cite{anan} is proved that for the path graphs with $n\geqslant2$ , we have 
$\gamma_{MR}(P_n)=\lfloor \dfrac{n}{2}\rfloor-\lfloor \dfrac{n+1}{4}\rfloor$
and for the cycle graphs with $n\geqslant3$ , we have 
$\gamma_{MR}(C_n)=\lfloor \dfrac{n+1}{2}\rfloor-\lfloor \dfrac{n}{4}\rfloor$.\\
Here, we will consider the complement of this two important family of graphs. With case calculations for $n\leq12$, we have:
\[
\begin{array}{|c|ccccccccccc|}
\hline
n&2&3&4&5&6&7&8&9&10&11&12\\
\hline
\gamma_{MR}(\overline{P_n})&2&2&1&1&0&1&0&0&-1&0&-1\\
\hline
\gamma_{MR}(\overline{C_n})&&3&2&2&1&1&0&-1&-1&-1&-1\\
\hline
\end{array}
\]

\begin{proposition}\label{51}
Let $n\geq 12$.\\
\textnormal{(i)} For the complement of the path graph $P_n$, we have $\gamma_{MR}(\overline{P_n})=-1$.\\
\textnormal{(ii)} For the complement of the cycle graph $C_n$, we have $\gamma_{MR}(\overline{C_n})=-1$.
\end{proposition}

\begin{proof}  
Since the proofs of \textnormal{(i)} and \textnormal{(ii)}are completely similar, we only prove the first part .\\
Let $E(P_n)=\{v_1v_2, v_2v_3, \cdots, v_{n-1}v_n\}$ and 
$f$ be a $\gamma_{MR}(\overline{P_n})$-function. By the definition of majority Roman dominating function, there exists at least one vertex 
$v_i$ with at most two non-adjacent vertices $x$ and $y$, and $f(N[v_i]) \geq1$. Hence, $\gamma_{MR}(\overline{P_n})= f(V)=f(N[v_i])+f(x)+f(y)\geqslant 1-1-1= -1$.\\
On the other hand, if $n\geqslant12$ and $2n+1=3q+r$, where  $r\in\{0,1,2\}$, we define the  function
 $$
g_n(v_i)=\left\{
\begin{array}{cl}
-1&1\leqslant i \leqslant q\\
2&q+1\leqslant i \leqslant n-r\\
1&\text{otherwise}
\end{array}
\right.
$$
such that for any $i=2,3,\cdots,q-1$, we have  $g_n([v_i])=1$,and so $|N_g|=q-2\geqslant \dfrac{n}{2}$, that is, $g_n$ is an MRDF. Also, we have $f(V)=-q+2(n-r-q)+r =2n-3q-r=-1$. Hence, $\gamma_{MR}(\overline{P_n})=-1$ and $g_n$ is a $\gamma_{MR}(\overline{P_n})$-function.

\end{proof}

%
%
%
%

\begin{proposition}
Let $n\geq 3$ and $M$ be a perfect matching in the complete graph $K_{2n}$. If  $G=K_{2n}-M$, then  $\gamma_{MR}(G)= 0$.
\end{proposition}

\begin{proof}
Let $V(K_{2n})=\{v_1, v_2, \cdots, v_{2n}\}$ and $M=\{v_1v_2, v_3v_4, \cdots, v_{2n-1}v_{2n}\}$. Let $f$ is an MRDF on $G$, according to the definition of majority Roman dominating function, there is at least one vertex $v_i$ such that $f(N[v_i])\geq 1$.\\
Assume that $v_j$ is the unique non-adjacent vertex to $v_i$, therefore, $f(V)=f(N[v_i]) +f(v_{j})\geqslant 1+ (-1) =0$.\\
 On the other hand, we can define the function $g: V\rightarrow \{-1, 1, 2\}$ by
$$
g(v_i)=\left\{\begin{array}{ccc}
-1&&2\leq i \leq n +2\\
2&&i= 1, ~2n\\
1&&\text{otherwise}
\end{array}\right.
$$
such that $g(V)=-(n+1)+2\times2+(n-3)=0$.
Since  $g(N[v_i])\geq 1$, for $i\in\{3,4,\cdots,n+1\}\cup\{1,2(n-\lfloor\dfrac{n}{2}\rfloor+1)\}$, we have $|N_g|=n+1$ and so $g$  is  an $MRDF$ on $G$. Hence, $\gamma_{MR}(G)= 0$ and $g$ is a $\gamma_{MR}(G)$-function.
\end{proof}

\section{Relationship between the Majority Roman Domination number $\gamma_{MR}$, Domination number $\gamma$ and Independence number $\beta_0$ of Trees.}

In this section, we obtain some bounds for the majority Roman domination number of trees in terms of $\gamma(T)$ and $\beta_0(T)$. 

\begin{theorem}\label{ore}\cite{bl}
(Ore) If a graph $G$ has no isolated vertices, then $\gamma(G) \leq \dfrac{n}{2}$. 
\end{theorem}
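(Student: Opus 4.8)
The plan is to prove this classical bound of Ore by showing that the complement of a minimum dominating set is again a dominating set. Concretely, I would fix a dominating set $D$ with $|D|=\gamma(G)$; being of minimum cardinality, $D$ is in particular a \emph{minimal} dominating set, so no proper subset of $D$ dominates $G$. The whole argument then reduces to the single claim that $V(G)\setminus D$ is also a dominating set: once this is known, the definition of $\gamma(G)$ forces $\gamma(G)\le |V(G)\setminus D| = n-|D| = n-\gamma(G)$, and rearranging gives $2\gamma(G)\le n$, i.e. $\gamma(G)\le \frac{n}{2}$.

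To prove the claim, I would argue that every vertex of $D$ has at least one neighbor lying outside $D$; this suffices, since each vertex of $V(G)\setminus D$ already lies in $V(G)\setminus D$ and is therefore dominated, while each vertex of $D$ would then be dominated by a neighbor in $V(G)\setminus D$. I would establish ``every $v\in D$ has a neighbor outside $D$'' by contradiction. Suppose some $v\in D$ has all of its neighbors inside $D$. Since $G$ has no isolated vertices, $v$ still has at least one neighbor, say $u$, and by assumption $u\in D$. I would then check that $D\setminus\{v\}$ remains a dominating set: the vertex $v$ is dominated by $u\in D\setminus\{v\}$; any vertex that $v$ used to dominate is a neighbor of $v$ and hence (by the standing assumption) already belongs to $D\setminus\{v\}$, so it dominates itself; and every other vertex retains whatever dominator it had in $D$. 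This contradicts the minimality of $D$.

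The main obstacle — and the only place the hypothesis ``no isolated vertices'' is genuinely used — is this last verification that $D\setminus\{v\}$ still dominates $G$. The hypothesis is exactly what guarantees the existence of the neighbor $u\in D$ that takes over the job of dominating $v$ after $v$ is deleted; without it, a vertex $v\in D$ could be isolated and its removal would leave $v$ undominated, so the complement need not dominate (the edgeless graphs, where $\gamma$ equals $n$, show the bound genuinely fails for isolated vertices). Everything else is routine bookkeeping with the definition of domination, so once the claim is in hand the arithmetic $\gamma(G)\le n-\gamma(G)$ closes the argument immediately.
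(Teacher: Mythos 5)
Your proof is correct, and it is the standard textbook argument: the complement of a minimal dominating set in an isolate-free graph is again a dominating set, whence $\gamma(G)\le n-\gamma(G)$. The paper itself supplies no proof of this statement — it is quoted as Ore's theorem (Theorem 2.1 of \cite{bl}) — and your argument is essentially the one found there, so there is nothing to reconcile.
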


\begin{theorem}\label{st}\cite{anan}
For any star $S_n=K_{1,n-1}$ on $n\geq 2$ vertices, $\gamma_{MR}(S_n)=3-n$.
\end{theorem}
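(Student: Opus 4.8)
The plan is to establish the equality by proving the two inequalities $\gamma_{MR}(K_{1,n-1}) \le 3-n$ and $\gamma_{MR}(K_{1,n-1}) \ge 3-n$ separately. Write $c$ for the center and $v_1,\dots,v_{n-1}$ for the leaves, so that $N[c]=V$ while $N[v_i]=\{c,v_i\}$ for each $i$. All the interaction between the two defining conditions is localized at the center, and this is what makes the extremal value achievable.

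For the upper bound I would exhibit an explicit MRDF of weight $3-n$: set $f(c)=2$ and $f(v_i)=-1$ for every leaf. The weight is then $2+(n-1)(-1)=3-n$, so it remains only to check admissibility. Condition $(ii)$ is immediate, since every leaf carries value $-1$ and its unique neighbor $c$ carries value $2$. For condition $(i)$ the key computation is that for each leaf $f(N[v_i])=f(c)+f(v_i)=2-1=1\ge 1$; hence all $n-1$ leaves satisfy the defining inequality, and since $n-1\ge\lceil n/2\rceil$ for every $n\ge 2$, at least half of the vertices qualify. (The center itself fails the inequality once $n\ge 3$, since $f(N[c])=w(f)=3-n$, but this is harmless.) This is the step I expect to require the most care: one must observe that a single weight-$2$ center simultaneously Roman-dominates every leaf and forces the majority inequality to hold at each leaf, so that no positive contribution from the leaves is ever needed.

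For the lower bound I would argue by a global counting estimate using the sets $P_f,Q_f,O_f$. For an arbitrary MRDF $f$, since $|P_f|+|Q_f|+|O_f|=n$, the weight rewrites as
$$w(f)=|P_f|+2|Q_f|-|O_f|=2|P_f|+3|Q_f|-n,$$
so the desired bound $w(f)\ge 3-n$ is equivalent to $2|P_f|+3|Q_f|\ge 3$. I would split into two cases. If $O_f=\emptyset$, then $|P_f|+|Q_f|=n$ and $2|P_f|+3|Q_f|\ge 2(|P_f|+|Q_f|)=2n\ge 4$. If $O_f\ne\emptyset$, pick any vertex $u$ with $f(u)=-1$; condition $(ii)$ forces $u$ to have a neighbor of value $2$, so $Q_f\ne\emptyset$ and $3|Q_f|\ge 3$. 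In either case $2|P_f|+3|Q_f|\ge 3$, which yields $w(f)\ge 3-n$.

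Combining the two bounds gives $\gamma_{MR}(K_{1,n-1})=3-n$. I would point out that the lower bound is in fact a graph-independent consequence of the Roman condition $(ii)$ alone, so the real content of the theorem lies in the construction: the star is precisely the graph on which this universal bound is attained, because its center can Roman-dominate and majority-dominate every other vertex at once.
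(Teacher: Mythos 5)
The paper does not prove this theorem at all --- it is stated with a citation to \cite{anan} and used as a known result --- so there is no in-paper proof to compare against; judged on its own, your argument is correct and complete. The upper-bound construction (center $2$, all leaves $-1$) is the expected one, and you verify the right things: condition $(ii)$ is trivial, and the $n-1$ leaves, each with $f(N[v_i])=1$, already constitute at least half the vertices for $n\ge 2$. Your lower bound deserves a comment: rewriting $w(f)=2|P_f|+3|Q_f|-n$ and splitting on whether $O_f$ is empty uses only the Roman condition $(ii)$ and $n\ge 2$, so it actually establishes the universal bound $\gamma_{MR}(G)\ge 3-n$ for every graph of order $n$. This is consistent with, and for the star coincides with, the paper's later degree bound $\gamma_{MR}(G)\ge \frac{n(2-\Delta)}{\Delta+1}$, which at $\Delta=n-1$ also evaluates to $3-n$ (the paper uses $K_{1,n-1}$ as the sharpness example there). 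Your case analysis is thus a more elementary route to the same lower bound, bypassing the degree counting entirely, and the only graph-specific content of the theorem is, as you say, that the star attains it.
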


\begin{theorem}\label{the11}
Let $T$ be a tree of order $n\geq 2$, if there exists a $\gamma$-set, say $D$, such that $V\setminus D$ is an  independent set. Then $\gamma_{MR}(T)\leq 3\gamma(T) -n$. This upper bound is sharp by the path graph $P_5$.
\end{theorem}

\begin{proof}
Let $D$ is a $\gamma(T)$-set. By Theorem \ref{ore}, we know that $|D| \leq \dfrac{n}{2}$.
We define the  function 
\[
f(u)=\left\{\begin{array}{ccc}
2&&u\in D\\-1&&u\notin D
\end{array}•
\right.
\]
on $T$. If $u\in V\setminus D$, then $\emptyset\ne N(u)\subseteq D$ and so $f([u])\geqslant -1+2=1$. Since $|V\setminus D|\geqslant \dfrac{n}{2}$, $f$ is an MRDF on $T$. Finally, we have  $\gamma_{MR}(T)\leqslant f(V)=2\gamma(T)-(n-\gamma(T))=3\gamma(T)-n$.
\end{proof}

The following example shows that there are some infinite families of trees that satisfy the equality $\gamma_{MR}(T)= 3\gamma(T) -n$.

\begin{example}
Let $S_{a,b}$ be the double star on $a+b$ vertices with central vertices $u$ and $v$ with $deg(u)=a\geq 3$ and $deg(v)=b\geq 3$. It is clear that $\gamma_{MR}(S_{a,b})=3\gamma(S_{a,b}) -n$.
Double star graph $S_{5,6}$ is shown  in Figure \ref{fig7}.
\end{example}

\begin{figure}[h]
\begin{center}
\begin{tikzpicture}
\draw[line width=1](2,0)--(4,0);
\node at (2,-.3){$u$};\node at (4,-.3){$v$};
\foreach \x in {2,4} {
\node at (\x,.5){$2$};
 \filldraw (\x,0)circle(2pt); 
};
\foreach \y in {-1.4,-.5,.5,1.4} {
\node at (0,\y){$-1$};
 \filldraw (.5,\y)circle(2pt); 
  \draw[line width=1](2,0)--(0.5,\y);
};
\foreach \y in {-1.4,-.7,0,.7,1.4} {
\node at (6,\y){$-1$};
 \filldraw (5.5,\y)circle(2pt);
 \draw[line width=1](4,0)--(5.5,\y); 
};
\end{tikzpicture}
\end{center}
\caption{$\gamma_{MR}(S_{5,6})=-5$}
\label{fig7}
\end{figure}
In order to characterize all trees attaining upper bound in Theorem \ref{the11}, we define the family $\mathcal{T}$ of trees consisting all stars $S_p,\,P\geq3$ and all trees $T$ that can be obtained from a sequence $T_1, T_2, \cdots, T_k $ in $\mathcal{T}$ such that $T_1$ is a star $S_{p}$ and if $2\leq i\leq k$, then  $T_{i}$ can be obtained  from $T_{i-1}$ by the following Operetion.
\\
\textbf{Operetion $\Phi$:} If $T^*\in\mathcal{T}$, attach a copy of a star $S_{q}\in\mathcal{T}$, by joining its center to a support in $T^*$.

\begin{theorem}
Let $G$ be a graph with order $n\geq 2$ and maximum degree $\Delta$. Then
 $$\gamma_{MR}(G) \geq  \dfrac{n(2-\Delta)}{\Delta+1}$$.
\end{theorem}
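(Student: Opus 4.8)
The plan is to bound $w(f)$ from below for an \emph{arbitrary} MRDF $f$; since $\gamma_{MR}(G)$ is by definition the minimum weight, this suffices. Following the notation of the paper I would set $p=|P_f|$, $q=|Q_f|$, $o=|O_f|$, so that $p+q+o=n$ and $w(f)=p+2q-o$. Eliminating $o=n-p-q$ rewrites the weight in the convenient form $w(f)=2p+3q-n$, so the whole problem reduces to showing that $q$ cannot be too small relative to $n$.

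The key step — and the one I expect to carry the entire argument — is a counting bound that comes from the Roman condition rather than from the majority condition. Every vertex with $f$-value $-1$ is, by definition, adjacent to at least one vertex with $f$-value $2$. Assigning to each vertex of $O_f$ one such $2$-neighbour gives a map $\phi:O_f\to Q_f$, and since a vertex $w\in Q_f$ has at most $\deg(w)\le\Delta$ neighbours, at most $\Delta$ vertices can be sent to any given $w$. Hence $o\le\Delta q$. I regard establishing this inequality cleanly as the main (and essentially the only) obstacle; note that, perhaps surprisingly, the ``at least half the closed neighbourhoods'' hypothesis is not actually needed for it, since the Roman protection requirement applies to \emph{all} $-1$-vertices.

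With $o\le\Delta q$ in hand the remainder is algebra. Substituting $o=n-p-q$ gives $n-p\le(\Delta+1)q$, i.e. $q\ge\frac{n-p}{\Delta+1}$. Feeding this into $w(f)=2p+3q-n$ yields $w(f)\ge 2p+\frac{3(n-p)}{\Delta+1}-n=\frac{2\Delta-1}{\Delta+1}\,p+\frac{3n}{\Delta+1}-n$. Since $\Delta\ge 1$ the coefficient $\frac{2\Delta-1}{\Delta+1}$ is nonnegative, so discarding the $p$-term can only decrease the right-hand side, and therefore $w(f)\ge\frac{3n}{\Delta+1}-n=\frac{n(2-\Delta)}{\Delta+1}$. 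As this holds for every MRDF it holds for a minimum-weight one, which is the claim.

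Finally I would record two sanity checks. The chain of inequalities is tight precisely when $p=0$, each $2$-vertex has full degree $\Delta$ with all neighbours valued $-1$, and each $-1$-vertex has a unique $2$-protector; the disjoint union of stars $K_{1,\Delta}$ (and, for $\Delta=1$, a perfect matching) realises equality, which is consistent with the star value $3-n$ of Theorem \ref{st}. I would also flag the degenerate case $\Delta=0$ (an edgeless graph), where no $-1$ is admissible and the stated formula becomes vacuous, so the standing hypothesis should really be $\Delta\ge 1$ — automatic once $G$ is connected of order $n\ge 2$.
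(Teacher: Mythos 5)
Your proof is correct and takes essentially the same route as the paper's: both arguments rest on the key inequality $|O_f|\le \Delta|Q_f|$ extracted from the Roman protection condition, followed by the same elimination and algebra (the paper merely packages it via the shifted function $g+1$ with value classes $V_0,V_2,V_3$). Your closing remark that $\Delta\ge 1$ is genuinely required (the bound fails for edgeless graphs) identifies a hypothesis the paper's computation uses implicitly, in the step $\Delta|V_2|+2|V_2|\ge 3|V_2|$, without stating it.
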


\begin{proof}
Let the function $f$ be an MRDF on $G$. Then $f(V)= \vert V_{f,1} \vert  + 2 \vert V_{f,2}\vert - \vert V_{f,-1}\vert$. 

On the other hand, we know that $n = \vert V_{f,1}\vert  + \vert V_{f,2}\vert + \vert V_{f,-1} \vert$.

No, we have $f(V)+n= 2\vert V_{f,1} \vert  + 3 \vert V_{f,2}\vert $ and by multiplying both sides of the equality by $\Delta+1$, we have

\begin{align*}
(\Delta+1)(f(V)+n)&= 2(\Delta+1)\vert V_{f,1} \vert  + 3(\Delta+1) \vert V_{f,2}\vert \\
&= 2(\Delta+1)\vert V_{f,1} \vert  + 3(\Delta\vert V_{f,2}\vert)+3 \vert V_{f,2}\vert \\
&\geqslant  2(\Delta+1)\vert V_{f,1} \vert  + 3\vert V_{f,-1}\vert+3 \vert V_{f,2}\vert \\
&\geqslant  3(\vert V_{f,1} \vert  +3\vert V_{f,-1}\vert+ \vert V_{f,2}\vert) \\
&=3n
\end{align*}

Now, We have $(\Delta+1)(f(V)+n)\geqslant  3n$, that is $f(V)\geqslant \dfrac{n(2-\Delta)}{\Delta+1}$. Since $f$ be an arbitrary MRDF on $G$, we have found a lower bound for $\gamma_{MR}(G)$ as $\gamma_{MR}(G)\geqslant \dfrac{n(2-\Delta)}{\Delta+1}$

Since by  Theorem \ref{st}, we have $\gamma_{MR}(K_{1, n-1})=3-n$, this lower bound is sharp.
\end{proof}

\begin{theorem}
Let $T$ be a tree of order $n\geq 3$. The number of support vertices and  pendant vertices be $s$ and $l$, respectively. Then, $\gamma_{MR}(T)\leq \Big\lceil \dfrac{n+7s-5l}{4}\Big\rceil$. This bound is sharp if and only if $T \in \mathcal{T}$. 
\end{theorem}
\begin{proof}
Let $T$ be a tree that has  $s$ support vertices and $l$ pendant vertices. We proceed by induction on $n$. 
For $n=3$, we have $s=1, l=2$ and $\gamma_{MR}(T)=0$, and the equality holds.
This proves the base step of induction.

 Now, we suppose $n\geq 4$. 
 
 If  $diam(T)=2$, then $T$ is a star and we can obtain the result in terms of Theorem \ref{st}. So, we suppose that $diam(T) \geq 3$.
 
  Assume that for every tree $T^{'}$ with order $n^{'} <  n$, the inequality $\gamma_{MR}(T^{'}) \leq \lceil \dfrac{n^{'}+7s^{'}-5 l^{'}}{4}\rceil$ holds, that $s^{'}$ is the number of support vertices of $T^{'}$ and $l^{'}$ is the number of pendant vertices of $T^{'}$.

Now, if $T$ be a tree of order $n$ and  $diam(T)=d$. We consider a path $v_0v_1\cdots v_d$, with the length  $d$, in $T$. We have $\vert N(v_1)-\{v_2\} \vert=m \geq 1$.

The graph $T^{'}= T-(N(v)-\{v^{'}\})$ is a tree with the order  $n^{'}=n-m<n$. Also, we have $s'=s-1, l'= l-m+1$. So, based on the induction hypothesis, there is a function $f^{'}$ on $V(T')$ as a $\gamma_{MR}(T^{'})$-function that satisfies the inequality. By using the function $f^{'}$, We  define a function $f$ on $V(T)$ as follows,
$$
f(u)=\left\{\begin{array}{ccc}
f^{'}(u)&&u \in V(T'), \; u \neq v_1\\
2&&u= v_1\\
-1&&u\in N(v_1)- \{v_2\}
\end{array}\right.
 $$

According to the definition of function $f$, the inequality $f(N[u])\geq f'(N[u])$ holds for all vertices $u$ in $T^{'}$, except maybe $v_1$.
 
 Moreovere, we have $f(N[u])\geq 1$ for all vertices $u \in N(v_1)- \{v_2\}$. Therefore, $f$ is an MRDF on tree $T$.\\
So,

\begin{align*}
\gamma_{MR}(T)&\leq f(V(T)) \leq \gamma_{MR}(T')+3-m\\
&\leq \Big\lceil \dfrac{n'+7s'-5l'}{4}\Big\rceil +3-m\\
&\leq \Big\lceil \dfrac{n'+7s'-5l'}{4}+3-m\Big\rceil \\
&=  \Big\lceil \dfrac{(n-m)+7(s-1)-5(l-m+1)+12-4m}{4}\Big\rceil\\
&= \Big\lceil \dfrac{n+7s-5l}{4}\Big\rceil
\end{align*}

It is easy to see that the upper bound is sharp for any tree $T \in \mathcal{T}$.
 Since, we can consider $f$ as an MRDF on $T \in \mathcal{T}$ such that $f(v)=2$ for every support vertex and $f(u)=-1$ for any pendant vertex. Then $\gamma_{MR}(T)=2s-l= \Big\lceil \dfrac{n+7s-5l}{4}\Big\rceil$.
\end{proof}

A set $I\subseteq V(G)$ is called an independent set if the induced subgraph $G[I]$ has no edge. The size of the maximum independent set in $G$ is called the independence number of $G$, and is denoted by $\beta_0(G)$. 

\begin{proposition}
Let $T$ be a tree on $n\geq 2$ vertices. Then, $\gamma_{MR}(T) \leq 2n-3\beta_0(T)$. 
\end{proposition}

\begin{proof}
Let $T$ be a tree that the number of its support vertices is $s$ and the number of its pendant vertices is $l$. 
It is well known that $\beta_0(T)\geq \dfrac{n-s+l}{2}$ and $l \geq s$, and so we have $\beta_0 (T)\geq \dfrac{n}{2}$. \\
For an independent set $I=\{v_1, v_2, \cdots, v_{\beta_0(T)}\}$ of $T$ with cardinality $\beta_0 (T)$, let  $f:V(T)\to\{-1,1,2\}$ be a function such that $f(u)=-1$ for every vertex $u\in I$, and $f(u)=2$ otherwise. 

Obviously, $f$ is an MRDF on $T$ and $f(T)=2(v-\beta_0 (T))-\beta_0 (T)$. Thus, we deduce that  $\gamma_{MR}(T) \leq -\beta_0+2(n-\beta_0)= 2n-3\beta_0$, as required.

This bound is sharp for $T\in \mathcal{T}\cup\{P_2\}$. 
\end{proof}

\section{Some bounds for Majority Roman Domination number of graph $G \circ H$}

In this section, we obtain a lower bound on the majority roman domination number of every graph $G$ with order $n\geq 2$. 
Furthermore, we specify  lower and upper bounds on the majority roman domination number of $G \circ H$, where $H$ is a connected graph with $\delta \geq 2$.

The corona of two graphs $G_{1}$ and $G_{2}$, is the graph $G= G_{1} \circ G_{2}$ formed by one copy of $G_{1}$ and $|V(G_{1})|$ copies of $G_{2}$, where the i-th vertex of $G_{1}$ is adjacent to each vertex in the i-th copy of $G_{2}$.

\begin{figure}[H]
\begin{center}
\begin{tikzpicture}
\node at (0,1){
\begin{tikzpicture}
\node at (.3,1){$2$};\node at (-1,-.3){$2$};\node at (1,-.3){$2$};
\node at (1,3.3){$-1$};\node at (-1,3.3){$-1$};\node at (0,2.3){$1$};
\node at (-2.4,0){$-1$};\node at (-3,1.3){$-1$};\node at (-3,-1.3){$-1$};
\node at (2.4,0){$-1$};\node at (3,1.3){$-1$};\node at (3,-1.3){$-1$};
\draw(0,1)--(1,0);\draw(0,1)--(-1,0);\draw(0,1)--(0,2);\draw(0,1)--(-1,3);\draw(0,1)--(1,3);\draw(-1,0)--(1,0);
\draw(0,2)--(1,3);\draw(0,2)--(-1,3);\draw(-1,3)--(1,3);
\draw(-2,0)--(-3,1);\draw(-2,0)--(-3,-1);\draw(-3,1)--(-3,-1);\draw(-2,0)--(-1,0);\draw(-1,0)--(-3,1);\draw(-1,0)--(-3,-1);
\draw(2,0)--(3,1);\draw(2,0)--(3,-1);\draw(3,1)--(3,-1);\draw(2,0)--(1,0);\draw(1,0)--(3,-1);\draw(1,0)--(3,1);
\foreach \x in { -2, -1, 1, 2 }
\foreach \y in { 0 } \filldraw (\x,\y)circle(2pt);
\foreach \x in { -3, 3}
\foreach \y in { -1 } \filldraw (\x,\y)circle(2pt);
\foreach \x in { -3,0,3 }
\foreach \y in { 1 } \filldraw (\x,\y)circle(2pt);
\foreach \x in { 0}
\foreach \y in { 2 } \filldraw (\x,\y)circle(2pt);
\foreach \x in { -1,1 }
\foreach \y in { 3 } \filldraw (\x,\y)circle(2pt);
\end{tikzpicture}
};
\end{tikzpicture}
\end{center}
\caption{$\gamma_{MR}(K_3 \circ K_3)=-1$}
\label{fig8}
\end{figure}

\begin{example}\label{e1}
Let $G= H= K_3$. Then $\gamma_{MR}(G\circ H)=-1$.

In Figure 8, the graph $G\circ H$ is shown with an MRDF on it. The weight of this function is $-1$. In four step, we will show this function is a $\gamma_{MR}(K_3 \circ K_3)$-function.

Let $f$ be an MRDF on $G\circ H$. 

\textnormal{(i)} If $|V_{f,2}|\geq4$, then $f(V)\geq4\times2+8\times(-1)+0\times1=0$.

\textnormal{(ii)} If $|V_{f,2}|=1$, then $f(V)\geq1\times2+5\times(-1)+6\times1=3$.

\textnormal{(iii)} If $|V_{f,2}|=2$, then $f(V)\geq2\times2+7\times(-1)+3\times1=0$.\\
Hence, in each $\gamma_{MR}(K_3 \circ K_3)$-function, we must have:

\textnormal{(iv)} $|V_{f,2}|=3$. 
If $|V_{f,1}|=0$, then $|N_f|\leq4$, a contradiction.
\\
Consequently, $|V_{f,1}|\geq1$ and $f(V)\geq3\times2+8\times(-1)+1\times1=-1$
\\
According to the Figure \ref{fig8}, this proves $\gamma_{MR}(K_3 \circ  K_3)=-1$.
Note that  the $\gamma_{MR}(K_3 \circ K_3)$-function is not unique. In fact, if in figure \ref{fig8}, the value function $1$ is swapped by each of value functions $2$, the new function remains an MRDF on  $K_3 \circ K_3$.
\end{example}


\begin{theorem}
Let $G= K_{3k}$, $H=K_3$, and $k\geq 1$. Then, $\gamma_{MR}(G \circ H)=-k$.
\end{theorem}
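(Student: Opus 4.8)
The plan is to decompose $G\circ H$ into $3n$ identical ``units,'' control each unit's contribution to the weight via the Roman condition, and then combine this local bound with a global counting argument coming from the majority condition; the matching upper bound will be an explicit function generalising the one in Example~\ref{e1}. First I set up notation. Write $V(K_{3n})=\{v_1,\dots,v_{3n}\}$ and let $T_i=\{u_i^1,u_i^2,u_i^3\}$ be the $i$-th copy of $K_3$, so that $u_i^1,u_i^2,u_i^3$ form a triangle and each is joined to $v_i$; call $B_i=\{v_i\}\cup T_i$ the $i$-th unit, and note $|V(G\circ H)|=12n$. The crucial structural remark is that $N[u_i^1]=N[u_i^2]=N[u_i^3]=B_i$, whereas $N[v_i]=\{v_1,\dots,v_{3n}\}\cup T_i$. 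Hence, for a MRDF $f$, setting $t_i=f(v_i)$, $s_i=f(u_i^1)+f(u_i^2)+f(u_i^3)$, $T=\sum_k t_k$ and $w_i=f(B_i)=s_i+t_i$, one has $f(N[u_i^j])=w_i$ for all three $j$, while $f(N[v_i])=T+s_i$, and the total weight is $w(f)=\sum_{i=1}^{3n} w_i$.

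For the lower bound I would first prove the local claim that $w_i\ge -1$ for every $i$. This is a finite check driven by the Roman condition: each $u_i^j$ has all its neighbours inside $B_i$, so any $u_i^j=-1$ must be rescued by a $2$ lying in $B_i$. Splitting on the value $t_i\in\{-1,1,2\}$ and minimising $s_i$ subject to ``every $-1$ among the $u_i^j$ sees a $2$'' gives a minimum unit weight of exactly $-1$ in the cases $t_i=2$ (all three $u_i^j=-1$) and $t_i=-1$ (one $u_i^j=2$, the other two $-1$), and $+1$ when $t_i=1$; in particular $w_i\ge -1$ always. Next comes the key counting step. Because the common value $f(N[u_i^j])=w_i$ is shared by all three vertices of $T_i$, these three vertices satisfy the majority inequality $f(N[\cdot])\ge1$ simultaneously, namely iff $w_i\ge1$; call such a unit \emph{good} and let $a$ be their number. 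The vertices meeting the majority condition then number exactly $3a+b$, where $b=|\{i:\,T+s_i\ge1\}|\le 3n$ counts the qualifying central vertices. Since a MRDF forces $3a+b\ge 6n$ and $b\le 3n$, we obtain $a\ge n$, whence $w(f)=\sum_i w_i\ge a\cdot 1+(3n-a)(-1)=2a-3n\ge -n$, giving $\gamma_{MR}(G\circ H)\ge -n$.

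For the upper bound I would exhibit an $f$ of weight $-n$: set $f(v_i)=2$ for all $i$ (so $T=6n$); on $n$ designated good units let the triangle take values $\{-1,-1,1\}$ (so $s_i=-1$, $w_i=1$), and on the remaining $2n$ units let the triangle be $\{-1,-1,-1\}$ (so $s_i=-3$, $w_i=-1$). Since every $v_i=2$, each $-1$ vertex of every triangle is adjacent to a $2$, so the Roman condition holds; the three $u_i^j$ of each good unit satisfy $f(N[\cdot])=w_i=1$, and every central vertex satisfies $f(N[v_i])=T+s_i\ge 6n-3\ge1$ for $n\ge1$, yielding $3n+3n=6n$ qualifying vertices, exactly half. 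Its weight is $n\cdot1+2n\cdot(-1)=-n$, so $\gamma_{MR}(G\circ H)\le -n$, and equality follows.

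I expect the main obstacle to be making the counting step airtight, rather than the per-unit weight bound, which is a routine finite enumeration. The subtlety is that one must invoke the structural identity $N[u_i^1]=N[u_i^2]=N[u_i^3]$ to argue that the three triangle-vertices are ``all in or all out'' of the majority set; only then does the scarcity of the $3n$ central vertices (the bound $b\le 3n$) force at least $n$ units to be good, each thereby paying $+2$ above the per-unit minimum of $-1$. Aligning $3a+b\ge 6n$ with $w_i\ge -1$ so that the two estimates meet exactly at $-n$ is where the bookkeeping must be handled with care.
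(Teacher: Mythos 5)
Your proof is correct, but it takes a genuinely different route from the paper. The paper argues by induction on $n$: the base case is delegated to the example computing $\gamma_{MR}(K_3\circ K_3)=-1$, and the inductive step extends a weight-$(-k)$ function on $K_{3k}\circ K_3$ to a weight-$(-k-1)$ function on $K_{3(k+1)}\circ K_3$. That step only produces the upper bound $\gamma_{MR}\leq -n$; the matching lower bound is never established for general $n$ beyond the informal count in the base example. Your argument is direct and supplies both bounds cleanly: the decomposition into units $B_i=\{v_i\}\cup T_i$, the observation that $N[u_i^1]=N[u_i^2]=N[u_i^3]=B_i$ so the three triangle vertices enter or leave the majority set together, the per-unit bound $w_i\geq -1$ forced by the Roman condition (which I checked case by case on $t_i\in\{-1,1,2\}$ and which holds), and the count $3a+b\geq 6n$ with $b\leq 3n$ forcing $a\geq n$ good units, whence $w(f)\geq 2a-3n\geq -n$. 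Your explicit function for the upper bound (all centers $2$, $n$ triangles valued $\{1,-1,-1\}$, $2n$ triangles all $-1$) attains weight $-n$ with exactly $6n$ of the $12n$ vertices satisfying $f(N[v])\geq 1$, matching the paper's construction in spirit. What your approach buys is a rigorous lower bound and a structure that would generalize to $K_{an}\circ K_m$ with little change; what the paper's induction buys is brevity, at the cost of leaving the lower bound essentially unproved.
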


\begin{proof}
According to the Example \ref{e1}, we have $\gamma_{MR}(K_3 \circ K_3)=-1$, this proves the result for $k=1$. 

Now, We assume that $k\geq2$ and construct an MRDF on $G \circ H$ with weight equal $-k$.

We set $V(G)=\{u_1, u_2, \cdots, u_{3k}\}$ and denote the copy of $H$ corresponding to vertex $u_i$ by $H_i$ with $V(H_i)=\{h_{i1}, h_{i2}, h_{i3}\}$, where $1\leq i\leq3k$.

First, we  define the following function.

$$
f(u)=\left\{\begin{array}{ccc}
2&& u\in V(G) \\
1&& u=h_{i1},\; 1\leq i \leq k\\
-1&& \text{otherwise}
\end{array}
\right.
$$
It is clear that each vertex with function value $-1$ is adjacent to a vertex with function value $2$. Moreover,
we have $f(N[u_i])=6k-1> 1$ for $1\leq i\leq k$ and $f(N[u_i])=6k-3> 1$ for $k+1\leq i\leq 3k$, and also $f(N[h_{ij}])= 1$ for $1\leq i\leq k$ and $1\leq j\leq3$. Thus $|N_f|=3k+3k=6k$ and Since $n=12k$, 
the function $f$ is an MRDF on $G\circ H$ with $f(V)=3k\times2+8k\times(-1)+k\times1=-k$. 

Now, suppose that $g$ be $\gamma_{MR}(G \circ H)$-function, that is, $g(V)\leq f(V)$.  
With the help of $g$, we will construct new functions, all of which are $\gamma_{MR}(G \circ H)$-function, and the last function will be $f$, as required.

\textbf{Claim 1.} We can suppose that $g(h_{ij})\ne2$, where $1\leq i\leq 3k$ and $1\leq j\leq3$.

If $g(h_{ij})=2$, for some $i$ and $j$, then there are two cases. 
In the first case, $g(u_i)=2$, by setting $g(h_{ij})=1$, we have a new MRDF with less weight, a contradiction. In the second case, $g(u_i)<2$, by setting $g(h_{ij})=g(u_i)$, and then $g(u_i)=2$, we have a new MRDF with the same weight. It is a $\gamma_{MR}(G \circ H)$-function.

\textbf{Claim 2.} We can suppose that $g(u_{i})=2$, where $1\leq i\leq 3k$.

If $g(u_r)<2$ for some $1\leq r\leq 3k$, then  $g(u_r)+g(h_{r1})+g(h_{r2})+g(h_{r3})\geq 2$ and by setting $g(u_r)=2, g(h_{r1})=1, g(h_{r2})=g(h_{r3})=-1$, we have a new MRDF by less weight, a contradiction.

\textbf{Claim 3.} We can suppose that $S_i=g(u_i)+g(h_{i1})+g(h_{i2})+g(h_{i3})= -1$ or $1$, where $1\leq i\leq 3k$.

For each $i$, since $g(u_i)=2$ and $g(h_{ij})<2$, where $1\leq j\leq3$, then 
$S_i\in\{-1, 1, 3 ,5\}$. If $S_i=3$ or $5$, then by setting $g(h_{r1})=1, g(h_{r2})=g(h_{r3})=-1$, we have a new MRDF with less weight, a contradiction.

\textbf{Claim 4.} $|\{i:S_i=1,1\leq i\leq3k\}|= k$

If $|\{i:S_i=1,1\leq i\leq3k\}|< k$, then $|N_g|<3k+3|V(G)|=6k<\dfrac{n}{2}$, a contradiction. Also, if $|\{i:S_i=1,1\leq i\leq3k\}|> k$, then $g(V)>k\times1-2k\times(-1)=-k=f(V)$, a contradiction.

By this four claims, we have $g(V)=k-2k=-k$, and we have proved $\gamma_{MR}(G \circ H)=\gamma_{MR}(K_{3k} \circ K_3)=-k$
\end{proof}

\begin{lemma}\label{t1}
Let $m, ~n \in N$ and $m\geq 3$. Then $\lfloor \dfrac{n}{m}\rfloor m+n \leq \lceil \dfrac{nm+n}{2}\rceil$.
\end{lemma}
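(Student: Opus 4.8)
The plan is to bound the left-hand side from above by $2n$ and the right-hand side from below by $2n$, invoking the hypothesis $m \geq 3$ only in the second estimate. Since the two bounds meet exactly at $2n$, the claimed inequality will then follow by transitivity.

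First I would record the elementary estimate $\lfloor n/m \rfloor\, m \leq n$, valid for every positive integer $m$ because $\lfloor n/m \rfloor \leq n/m$. This immediately controls the left-hand side:
$$\left\lfloor \frac{n}{m} \right\rfloor m + n \leq n + n = 2n.$$
Next I would estimate the right-hand side from below by dropping the ceiling and then using $m \geq 3$. Dropping the ceiling gives $\lceil (nm+n)/2 \rceil \geq n(m+1)/2$, and the assumption $m \geq 3$ yields $m+1 \geq 4$, so that
$$\left\lceil \frac{nm+n}{2} \right\rceil \geq \frac{n(m+1)}{2} \geq \frac{4n}{2} = 2n.$$

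Chaining the two displays gives $\lfloor n/m \rfloor\, m + n \leq 2n \leq \lceil (nm+n)/2 \rceil$, which is exactly the assertion. I do not expect any genuine obstacle here; the only point deserving care is verifying that $m \geq 3$ is precisely the condition that makes the two bounds align at $2n$. Indeed, for $m = 2$ the lower estimate would only produce $3n/2 < 2n$, so the hypothesis is sharp, and it would be worth remarking that equality in the lemma occurs, for instance, when $m = 3$ and $m \mid n$ (as the case $m=n=3$ illustrates), confirming that the bound cannot be improved.
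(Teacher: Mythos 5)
Your proof is correct, and it is cleaner than the one in the paper. Both arguments ultimately rest on the same two estimates --- $\lfloor n/m\rfloor\, m \leq n$ and $2n \leq n(m+1)/2$ for $m \geq 3$ --- but the paper only deploys them in its final case $n > m \geq 3$, after first disposing separately of the cases $n=1$, $n=m$, and $1 < n < m$ (and, within the last case, further splitting according to the parity of $nm+n$ to handle the ceiling). You observe that the chain
$$\left\lfloor \frac{n}{m}\right\rfloor m + n \;\leq\; 2n \;\leq\; \frac{n(m+1)}{2} \;\leq\; \left\lceil \frac{nm+n}{2}\right\rceil$$
holds uniformly for all $n \geq 1$ once $m \geq 3$, which makes the entire case analysis unnecessary; the ceiling is handled for free since $\lceil x\rceil \geq x$. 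Your closing remarks are also accurate: the hypothesis $m \geq 3$ is exactly what forces the two bounds to meet at $2n$ (for $m=2$ one only gets $3n/2$), and equality does occur, e.g.\ at $m=n=3$. The one small thing I would add if this were to replace the paper's proof is an explicit word on the degenerate reading of $N$: if $0 \in N$ the claim is trivial for $n=0$, and for $n \geq 1$ your inequality $n(m+1)/2 \geq 2n$ genuinely uses $n > 0$ when dividing through by $n$, so state $n \geq 1$ where you invoke it.
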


\begin{proof}
It is clear that $\lfloor\dfrac{n}{m}\rfloor \leq \dfrac{n}{m}$. So, $\lfloor\dfrac{n}{m}\rfloor m+n\leq 2n$. On the other hand, since $m\geq 3$, we have  $2n \leq \dfrac{mn+n}{2}$. Therefore, $\lfloor \dfrac{n}{m}\rfloor m+n \leq \lceil \dfrac{nm+n}{2}\rceil$.
\end{proof}
\begin{theorem}
For every graph $G$, $\vert G\vert= n$, and connected graph $H$ with $\delta (H)\geq 2$, $\vert H\vert=m$, we have $\gamma_{MR}(G\circ H)\leq (m-4)\lceil \dfrac{n}{2}\rceil -m \lfloor \dfrac{n}{2}\rfloor +2n$. 
\end{theorem}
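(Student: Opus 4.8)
The statement is an upper bound, so the plan is to exhibit an explicit MRDF $f$ on $G\circ H$ whose weight equals the right-hand side. Write $v_1,\dots,v_n$ for the vertices of $G$ and let $H_i$ denote the $i$-th copy of $H$, so that every vertex of $H_i$ is adjacent to $v_i$ and $|V(G\circ H)|=n(m+1)$. The guiding idea is that, because $v_i$ dominates all of $H_i$, assigning $f(v_i)=2$ automatically fulfils the Roman requirement for every $-1$ vertex inside $H_i$; the cost of a copy is then governed entirely by how many $+1$'s we are forced to place in $H_i$ to make its vertices satisfy the local condition $f(N[\cdot])\geq 1$. I would set $f(v_i)=2$ for all $i$, split the indices $1,\dots,n$ into $\lceil n/2\rceil$ \emph{good} copies and $\lfloor n/2\rfloor$ \emph{bad} copies, and spend positive weight only on the good ones.

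For a \emph{bad} copy I put $f\equiv -1$ on $H_i$, so the copy has weight $2-m$ and none of its vertices needs to meet the neighbourhood condition. For a \emph{good} copy I choose any two vertices of $H_i$ to receive $-1$ and set the remaining $m-2$ vertices to $+1$, so the copy has weight $2+(m-2)-2=m-2$. The crucial point, and where $\delta(H)\geq 2$ enters, is that every vertex $u$ of a good copy satisfies $f(N[u])\geq 1$: since only two vertices of $H_i$ are negative, at most two of $u$'s neighbours in $H_i$ are negative when $f(u)=1$ and at most one is negative when $f(u)=-1$, and a short computation using $f(v_i)=2$ shows in both cases $f(N[u])\geq \deg_{H_i}(u)-1\geq \delta(H)-1\geq 1$. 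Likewise $f(N[v_i])=2+(m-4)+2\deg_G(v_i)\geq m-2\geq 1$ for every good root, so all $m+1$ vertices of a good copy count toward the majority.

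With this assignment the number of vertices $w$ with $f(N[w])\geq 1$ is at least $\lceil n/2\rceil(m+1)\geq \tfrac{n}{2}(m+1)=\tfrac12|V(G\circ H)|$, so the majority condition holds; if one prefers to count the satisfying vertices through the $\lfloor n/m\rfloor$ expression rather than through the clean good/bad split, Lemma~\ref{t1} (with the threshold written as $\lceil\tfrac{nm+n}{2}\rceil$) supplies exactly the inequality needed to clear the majority line. Summing weights gives $w(f)=\lceil n/2\rceil(m-2)+\lfloor n/2\rfloor(2-m)$, which after using $2n=2\lceil n/2\rceil+2\lfloor n/2\rfloor$ rearranges to $(m-4)\lceil n/2\rceil-m\lfloor n/2\rfloor+2n$, establishing the claimed bound. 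I expect the main obstacle to be the vertex-by-vertex check of the local condition inside the good copies — in particular confirming that the two $-1$ vertices still satisfy $f(N[\cdot])\geq 1$ regardless of whether they happen to be adjacent — which is precisely where the hypotheses $\delta(H)\geq 2$ and $m\geq 3$ are essential.
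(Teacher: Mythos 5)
Your construction is correct and it actually does something the paper's own proof fails to do: it attains the stated bound. The paper follows the same template (all roots of $G$ get $2$, the copies of $H$ are split into $\lceil n/2\rceil$ ``good'' ones and $\lfloor n/2\rfloor$ all-$(-1)$ ones), but in its good copies it assigns $+1$ to $m-1$ vertices and $-1$ to only one, so its function has weight $2n+(m-2)\lceil\frac{n}{2}\rceil-m\lfloor\frac{n}{2}\rfloor$, which exceeds the claimed right-hand side by $2\lceil\frac{n}{2}\rceil$; indeed for $K_1\circ K_3$ the paper's assignment has weight $3$ while the theorem (and the paper's own sanity check) asserts $1$. Your variant with exactly two $-1$'s and $m-2$ ones per good copy gives weight $2n+(m-4)\lceil\frac{n}{2}\rceil-m\lfloor\frac{n}{2}\rfloor$, which is precisely the bound, and your verification of the local condition is sound: for $u$ in a good copy with $\deg_{H}(u)=d$, the worst cases are $f(u)=1$ with both negative vertices among its neighbours, giving $f(N[u])=1+(d-2)-2+2=d-1$, and $f(u)=-1$ adjacent to the other negative vertex, giving $f(N[u])=-1+(d-2)+2=d-1$, so $\delta(H)\geq 2$ suffices in all cases; the roots of good copies and the count $\lceil\frac{n}{2}\rceil(m+1)\geq\frac{1}{2}n(m+1)$ then settle the majority requirement without needing Lemma~\ref{t1}. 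In short, your proof is a corrected version of the paper's argument: same decomposition, but with the per-copy weights adjusted so that the arithmetic actually closes.
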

\begin{proof}
Let $V(G)= \{ v_1, v_2, \cdots, v_n\}$. We consider graphs $H_{i}$ with $V(H_i)= \{h_{i1}, h_{i2}, h_{i3},\\ \cdots, h_{im}\}$ for every vertex $v_i \in V(G)$, $ 1\leq i \leq n$, such that every vertex of $H_{i}$  is adjacent to $v_i$. Now, we define the function $f$ as follows.
$$
f(u)=\left\{\begin{array}{ccl}
2&& u\in V(G) \\
1&& u=h_{ij} \in H_{i}, 1\leq i \leq \lceil  \dfrac{n}{2}\rceil, 1\leq j\leq m-1\\
-1&& \text{otherwise}
\end{array}
\right.
$$
For every vertex of $h_{ij} \in H_{i}$ that $f(h_{ij})=-1$, there is the vertex $v_i$ in graph $G$ such that this vertex is adjacent to $h_{ij}$ and $f(v_i)= 2$. Moreover, for $m \lceil \frac{n}{2}\rceil+ \lceil \frac{n}{2} \rceil$ vertices of $G\circ H$, like $v$, $f(N[v])\geq 1$. Now, the function $f$ is an MRDF of $G\circ H$. Therefore, $\gamma_{MR}(G\circ H)\leq (m-4)\lceil \frac{n}{2}\rceil -m \lfloor \frac{n}{2}\rfloor +2n$.
 \\
This bound is sharp for $K_4$, since, $\gamma_{MR}(K_4=K_1\circ K_3)=1\leq (3-4)\lceil \dfrac{1}{2}\rceil -3 \lfloor \dfrac{1}{2}\rfloor + 2 \times 1= 1$
\end{proof}

\begin{theorem}
Let $G$ be a graph with $\vert G\vert= n$, and $H$ be a connected graph with $\delta (H)\geq 2$ and $\vert H\vert=m$. Then $(2-m)n +2 \lfloor \dfrac{n}{m}\rfloor \leq \gamma_{MR}(G\circ H) $. 
\end{theorem}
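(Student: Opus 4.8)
The plan is to take an arbitrary $\gamma_{MR}(G\circ H)$-function $f$ and bound its weight copy by copy. Write $V(G)=\{v_1,\dots,v_n\}$, let $H_{v_i}$ denote the $i$-th copy of $H$ (so $v_i$ is adjacent to every vertex of $H_{v_i}$), and set $w_i=f(H_{v_i})=\sum_{u\in H_{v_i}}f(u)$. Denoting by $p_i,q_i,o_i$ the numbers of vertices of $H_{v_i}$ lying in $P_f,Q_f,O_f$, one has $p_i+q_i+o_i=m$ and hence $w_i=2p_i+3q_i-m$. Since $\delta(H)\ge 2$ forces $m\ge 3$ and $\deg_H(u)+1\ge 3$ for every $u$, the only neighbour of a copy-vertex outside its own copy is $v_i$; this locality is what makes a per-copy analysis possible, and the whole estimate reduces to controlling the quantity $f(v_i)+w_i$ for each $i$.

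First I would prove the baseline inequality $f(v_i)+w_i\ge 2-m$ for every $i$, equivalently $f(v_i)+2p_i+3q_i\ge 2$. The case $f(v_i)=2$ is immediate since then $w_i\ge -m$. If $f(v_i)\ne 2$, the Roman condition does the work: a copy-vertex labelled $-1$ can only be dominated by a $2$ inside its own copy, so $o_i\ge 1$ forces $q_i\ge 1$ (giving $2p_i+3q_i\ge 3$), whereas $o_i=0$ makes every copy-vertex positive (giving $2p_i+3q_i\ge 2m\ge 6$); combined with $f(v_i)\ge -1$ this yields the claim. Summing over all $n$ copies already gives $f(V)\ge (2-m)n$, so the remaining task is to harvest the extra $2\lfloor n/m\rfloor$.

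The key step is the improved bound $f(v_i)+w_i\ge 4-m$ for every copy that contains a ``good'' vertex, i.e. a copy-vertex $u$ with $f(N[u])\ge 1$. For such a $u$, writing $S=N_{H_{v_i}}[u]$ with $|S|=k=\deg_H(u)+1\ge 3$ and letting $a,b,c$ count the labels $+1,+2,-1$ inside $S$, goodness reads $a+2b-c\ge 1-f(v_i)$, while $a+b+c=k$. Adding these gives $2a+3b\ge k+1-f(v_i)\ge 4-f(v_i)$, and since $p_i\ge a$ and $q_i\ge b$ we obtain $f(v_i)+2p_i+3q_i\ge 4$, that is $f(v_i)+w_i\ge 4-m$. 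I expect this uniform inequality to be the heart of the argument, and its clean derivation hinges precisely on $k\ge 3$.

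Finally I would count good vertices. By definition at least $\lceil (nm+n)/2\rceil$ vertices of $G\circ H$ are good; at most $n$ of them lie in $V(G)$, so at least $\lceil (nm+n)/2\rceil-n$ good vertices lie in the copies, and Lemma \ref{t1} gives $\lceil (nm+n)/2\rceil-n\ge m\lfloor n/m\rfloor$. As each copy has only $m$ vertices, at least $\lfloor n/m\rfloor$ copies contain a good vertex. Splitting the sum $f(V)=\sum_{i=1}^{n}\bigl(f(v_i)+w_i\bigr)$ into these $t\ge\lfloor n/m\rfloor$ ``boosted'' copies (each contributing $\ge 4-m$) and the rest (each contributing $\ge 2-m$) yields $f(V)\ge (2-m)n+2t\ge (2-m)n+2\lfloor n/m\rfloor$, as required. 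The main things to watch are the bookkeeping that converts a lower bound on the number of good copy-vertices into a lower bound on the number of distinct boosted copies, and making the Roman-domination case analysis in the baseline step genuinely exhaustive.
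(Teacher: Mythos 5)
Your proposal is correct, and it takes a genuinely different --- and substantially more rigorous --- route than the paper's. The paper never reasons about an arbitrary MRDF: it exhibits the single candidate function with value $2$ on $V(G)$, one $+1$ and $m-1$ values $-1$ in each of the first $\lfloor \frac{n}{m}\rfloor$ copies, and $-1$ elsewhere (which has weight exactly $(2-m)n+2\lfloor \frac{n}{m}\rfloor$), uses Lemma \ref{t1} to discuss whether this one function meets the majority condition, and concludes from the dichotomy ``either it is an MRDF and equality holds, or it is not an MRDF, so this proves the result.'' The second branch is a non sequitur --- the failure of one candidate says nothing about the weights of the other MRDFs --- so the paper's argument at best exhibits sharpness in special cases and does not actually establish the lower bound. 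Your argument is the genuine one: fix an arbitrary $\gamma_{MR}$-function, decompose the weight as $\sum_{i}\bigl(f(v_i)+f(H_{v_i})\bigr)$, prove the per-copy floor $f(v_i)+w_i\ge 2-m$ (where the Roman condition plus the fact that a $-1$ copy-vertex can only be defended from inside its copy when $f(v_i)\ne 2$, together with $m\ge 3$ from $\delta(H)\ge 2$, close the case analysis), upgrade to $4-m$ on any copy containing a copy-vertex $u$ with $f(N[u])\ge 1$ via $\deg_H(u)+1\ge 3$, and then combine the majority condition with Lemma \ref{t1} (the only ingredient shared with the paper) to force at least $m\lfloor \frac{n}{m}\rfloor$ good copy-vertices and hence at least $\lfloor \frac{n}{m}\rfloor$ boosted copies. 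I checked each of these steps, including the conversion from good copy-vertices to distinct boosted copies, and they hold. In short, your approach buys an actual proof of the stated inequality, whereas the paper's construction only supplies the matching examples such as $K_{3n}\circ K_3$.
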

\begin{proof}
Let $V(G)= \{ v_1, v_2, \cdots, v_n\}$. We consider graph $H_{i}$ with $V(H_i)= \{h_{i1}, h_{i2}, h_{i3},\\ \cdots, h_{im}\}$ for every vertex $v_i \in V(G)$, $ 1\leq i \leq n$ such that every vertex of $H_{i}$  is adjacent to $v_i$. Now, we define function $f$ as follows.

$$
f(u)=\left\{\begin{array}{ccl}
2&& u\in V(G) \\
1&& u=h_{ij} \in H_{i}, 1\leq i \leq \lceil  \dfrac{n}{m}\rceil,\\
-1&& \text{otherwise}
\end{array}
\right.
$$
For every vertex of $u \in H_{i}$ with $f(u)=-1$, there is a vertex in $G$, $v_i$, such that this vertex is adjacent $u$ and $f(v_i)= 2$. Moreover, according to the Lemma \ref{t1} and the definition of $f$, for $\lfloor \dfrac{n}{m}\rfloor m+n \leq \lceil \dfrac{nm+n}{2}\rceil$ vertices of the $G \circ H$, $v$, $f(N[v])\geq 1$. If $\lfloor \dfrac{n}{m}\rfloor m+n = \lceil \dfrac{nm+n}{2} \rceil$, then $f$ is a majority Roman dominating function of $G \circ H$, and $(2-m)n +2 \lfloor \dfrac{n}{m}\rfloor = \gamma_{MR}(G \circ H) $. Otherwise, ($\lfloor \dfrac{n}{m} \rfloor m+n < \lceil \dfrac{nm+n}{2}\rceil$), then $f$ can not be a majority Roman dominating function of $G \circ H$, so, this proves the result.\\ 
Furthermore, in terms of Example \ref{e1} this bound is sharp, $\gamma_{MR}(K_{3n}\circ K_3)= (2-3) 3n+2 \lfloor \dfrac{3n}{3}\rfloor=-n $.
\end{proof}

\end{document}